\newtheorem{theorem}{Theorem}[section]
\newtheorem{lemma}[theorem]{Lemma}
\newtheorem{corollary}[theorem]{Corollary}
\theoremstyle{definition}
\newtheorem{definition}[theorem]{Definition}
\theoremstyle{remark}
\numberwithin{equation}{section}
\begin{document}
\title[Generalized Struve Functions]{Some Properties of a Class of Analytic
Functions defined by Generalized Struve Functions }
\author[M. Raza ]{Mohsan Raza$^{a}$}
\address{ $^{a}$Department of Mathematics, GC University Faisalabad,
Pakistan.}
\email{\textcolor[rgb]{0.00,0.00,0.84}{mohsan976@yahoo.com}}
\author[N. Ya\u{g}mur ]{Nihat YA\u{G}MUR$^{b}$}
\address{$^{b}$Department of Mathematics, Faculty of Science and Art,
Erzincan University, Erzincan, 24000, Turkey.}
\email{nhtyagmur@gmail.com}
\keywords{Analytic functions, Subordination, Generalized Struve Functions.}
\date{Received:\\
\indent$^{\ast }$ Corresponding author\\
2010\textit{\ Mathematics Subject Classification. }30C45, 30C80, 33C10.}

\begin{abstract}
The aim of this paper is to define \ a new operator by using the generalized
Struve functions $\tsum_{n=0}^{\infty }\frac{\left( -c/4\right) ^{n}}{\left(
3/2\right) _{n}\left( k\right) _{n}}z^{n+1}\ $with $k$ $=p+$ $\left(
b+2\right) /2\neq 0,-1,-2,\ldots $ and $b,c,k\in 
%TCIMACRO{\U{2102} }%
%BeginExpansion
\mathbb{C}
%EndExpansion
$. By using this operator we define a subclass of analytic functions. We
discuss some properties of this class such as inclusion problems, radius
problems and some other interesting properties related with this operator.
\end{abstract}

\maketitle

\section{Introduction}

Let $A$ be the class of functions $f$ of the form%
\begin{equation}
f\left( z\right) =z+\overset{\infty }{\underset{n=2}{\sum }}a_{n}z^{n},
\label{a}
\end{equation}%
which are analytic in the open unit disk $E=\left\{ z:\left\vert
z\right\vert <1\right\} $. A function $f$ is said to be subordinate to a
function $\ g$ written as $f\prec g$, if there exists a Schwarz function $w$
with $w\left( 0\right) =0$ and $\left\vert w\left( z\right) \right\vert <1$
such that $f\left( z\right) =g\left( w\left( z\right) \right) .$ In
particular if $g$ is univalent in $E,\ $then $f\left( 0\right) =g\left(
0\right) $ and $f\left( E\right) \subset g\left( E\right) .$

For any two analytic functions$\ f\left( z\right) \ $and$\ g\left( z\right)
\ $with%
\begin{equation*}
f\left( z\right) =\overset{\infty }{\underset{n=0}{\dsum }}b_{n}z^{n+1}\ 
\text{and}\ g\left( z\right) =\overset{\infty }{\underset{n=0}{\dsum }}%
c_{n}z^{n+1},\ z\in E,
\end{equation*}%
the convolution $\left( \text{Hadamard product}\right) \ $is given by%
\begin{equation*}
\left( f\ast g\right) \left( z\right) =\overset{\infty }{\underset{n=0}{%
\dsum }}b_{n}c_{n}z^{n+1},\ z\in E.
\end{equation*}

Consider the second order inhomogeneous differential equation, for some
details\ see \cite{Zang},%
\begin{equation}
z^{2}w^{\prime \prime }\left( z\right) +zw^{\prime }\left( z\right) +\left(
z^{2}-p^{2}\right) w\left( z\right) =\frac{4\left( z/2\right) ^{p+1}}{\sqrt{%
\pi }\Gamma \left( p+1/2\right) }.  \label{b}
\end{equation}%
The solution of the homogeneous part is Bessel functions of order $p$, where 
$p$ is real or complex number. The particular solution of the inhomogenious
equation defined in $\left( \ref{b}\right) $ is called the struve function
of order $p.$ It is defined as 
\begin{equation}
H_{p}\left( z\right) =\overset{\infty }{\underset{n=0}{\dsum }}\frac{\left(
-1\right) ^{n}\left( z/2\right) ^{2n+p+1}}{\Gamma \left( n+3/2\right) \Gamma
\left( p+n+3/2\right) }.  \label{c}
\end{equation}%
Now we consider the differential equation%
\begin{equation}
z^{2}w^{\prime \prime }\left( z\right) +zw^{\prime }\left( z\right) -\left(
z^{2}+p^{2}\right) w\left( z\right) =\frac{4\left( z/2\right) ^{p+1}}{\sqrt{%
\pi }\Gamma \left( p+1/2\right) }  \label{c1}
\end{equation}%
The equation $\left( \ref{c1}\right) $differs from the equation $\left( \ref%
{b}\right) $ in the coefficients of $w\left( z\right) $. Its particular
solution is called the modified struve functions of order $p$ and is given as%
\begin{equation*}
L_{p}\left( z\right) =-ie^{-ip\pi /2}H_{p}\left( iz\right) =\overset{\infty }%
{\underset{n=0}{\dsum }}\frac{\left( z/2\right) ^{2n+p+1}}{\Gamma \left(
n+3/2\right) \Gamma \left( p+n+3/2\right) }.
\end{equation*}%
Again consider the second order inhomogenous differential equation%
\begin{equation}
z^{2}w^{\prime \prime }\left( z\right) +bzw^{\prime }\left( z\right) +\left[
cz^{2}-p^{2}+\left( 1-b\right) p\right] w\left( z\right) =\frac{4\left(
z/2\right) ^{p+1}}{\sqrt{\pi }\Gamma \left( p+b/2\right) },  \label{d}
\end{equation}%
where $b,c,p\in 
%TCIMACRO{\U{2102} }%
%BeginExpansion
\mathbb{C}
%EndExpansion
.$ The equation $\left( \ref{d}\right) $ generalizes the equation $\left( %
\ref{b}\right) $ and $\left( \ref{c1}\right) .$ In particular for $b=1,\
c=1, $ we obtain $\left( \ref{b}\right) $ and for $b=1,\ c=-1,$ we obtain $%
\left( \ref{c1}\right) .$ Its particular solution has the series form%
\begin{equation}
M_{p,b,c}\left( z\right) =\overset{\infty }{\underset{n=0}{\dsum }}\frac{%
\left( -1\right) ^{n}c^{n}\left( z/2\right) ^{2n+p+1}}{\Gamma \left(
n+3/2\right) \Gamma \left( p+n+\left( b+2\right) /2\right) }.  \label{e}
\end{equation}%
and is called the generalized struve function of order$\ p.\ $This series is
convergent every where but not univalent in the open unit disk $E.$ We take
the transformation%
\begin{equation}
N_{p,b,c}\left( z\right) =2^{p}\sqrt{\pi }\Gamma \left( p+\left( b+2\right)
/2\right) z^{\left( -p-1\right) /2}M_{p,b,c}\left( \sqrt{z}\right) =\overset{%
\infty }{\underset{n=0}{\dsum }}\frac{\left( -c/4\right) ^{n}z^{n}}{\left(
3/2\right) _{n}\left( k\right) _{n}},  \label{f}
\end{equation}%
where $k=p+\left( b+2\right) /2\neq 0,-1,-2,\ldots $ and $\left( \gamma
\right) _{n}=\frac{\Gamma \left( \gamma +n\right) }{\Gamma \left( \gamma
\right) }=\gamma \left( \gamma +1\right) \ldots \left( \gamma +n-1\right) .$
This function is analytic in the whole complex plane and satisfies the
differential equation%
\begin{equation*}
4z^{2}w^{\prime \prime }\left( z\right) +2\left( 2p+b+3\right) zw^{\prime
}\left( z\right) +\left[ cz+2p+b\right] w\left( z\right) =2p+b.
\end{equation*}%
Some geometric properties such as univalency, starlikeness, convexity,
close-to-convexity of the function $N_{p,b,c}\left( z\right) $ has been
studied recently by Orhan and Yagmur \cite{orh} and Yagmur and Orhan \cite%
{orh1,yag}.

Dziok and Srivastava \cite{dziok1,dziok} defined the linear operator $H$ by
using the generalized hypergeometric functions and is given as $H\left(
\alpha _{1},\ldots \alpha _{s};\beta _{1},\ldots \beta _{q}\right)
:A\rightarrow A$ with $\alpha _{i}\in 
%TCIMACRO{\U{2102} }%
%BeginExpansion
\mathbb{C}
%EndExpansion
\left( i=1,2,\ldots ,s\right) $ and $\beta _{i}\in 
%TCIMACRO{\U{2102} }%
%BeginExpansion
\mathbb{C}
%EndExpansion
\backslash 
%TCIMACRO{\U{2124} }%
%BeginExpansion
\mathbb{Z}
%EndExpansion
_{0}^{-}\left( i=1,2,\ldots ,q\right) $ such that

\begin{equation*}
H\left( \alpha _{1},\ldots \alpha _{s};\beta _{1},\ldots \beta _{q}\right)
f\left( z\right) =z_{s}F_{q}\left( \alpha _{1},\ldots \alpha _{s};\beta
_{1},\ldots \beta _{q};z\right) \ast f\left( z\right) ,
\end{equation*}%
where%
\begin{equation*}
_{s}F_{q}\left( \alpha _{1},\ldots \alpha _{s};\beta _{1},\ldots \beta
_{q};z\right) =\overset{\infty }{\underset{n=0}{\dsum }}\frac{\left( \alpha
_{1}\right) _{n}\ldots \left( \alpha _{s}\right) _{n}z^{n}}{\left( \beta
_{1}\right) _{n}\ldots \left( \beta _{q}\right) _{n}n!},\ \ s\leq q+1;s,q\in 
%TCIMACRO{\U{2115} }%
%BeginExpansion
\mathbb{N}
%EndExpansion
_{0}=%
%TCIMACRO{\U{2115} }%
%BeginExpansion
\mathbb{N}
%EndExpansion
\cup \left\{ 0\right\}
\end{equation*}%
is the generalized hypergeometric function. Deniz \cite{deniz} use similar
argument to define a convolution operator $B_{k}^{c}:A\rightarrow A$ by
using generalized Bessel functions and is given as 
\begin{equation*}
B_{k}^{c}f\left( z\right) =\varphi _{k,c}\left( z\right) \ast f\left(
z\right) =z+\overset{\infty }{\underset{n=1}{\dsum }}\frac{\left(
-c/4\right) ^{n}a_{n+1}z^{n+1}}{\left( k\right) _{n}n!},\ \ \left( k=p+\frac{%
b+1}{2}\notin 
%TCIMACRO{\U{2124} }%
%BeginExpansion
\mathbb{Z}
%EndExpansion
_{0}^{-},c\in 
%TCIMACRO{\U{2102} }%
%BeginExpansion
\mathbb{C}
%EndExpansion
\right) ,
\end{equation*}%
where%
\begin{equation*}
\varphi _{k,c}\left( z\right) =z+\overset{\infty }{\underset{n=1}{\dsum }}%
\frac{\left( -c/4\right) ^{n}z^{n+1}}{\left( k\right) _{n}n!}.
\end{equation*}%
For some refrences for convolution operators see \cite{raina, sh, Atia}.

Now using $\left( \ref{f}\right) ,$ we define the following convolution
operator. Let%
\begin{equation*}
\varphi _{p,b,c}\left( z\right) =2^{p}\sqrt{\pi }\Gamma \left( p+\left(
b+2\right) /2\right) z^{\left( -p+1\right) /2}M_{p,b,c}\left( \sqrt{z}%
\right) =z+\overset{\infty }{\underset{n=1}{\dsum }}\frac{\left( -c/4\right)
^{n}z^{n+1}}{\left( 3/2\right) _{n}\left( k\right) _{n}}
\end{equation*}%
Then%
\begin{equation}
S_{k}^{c}f\left( z\right) =\varphi _{p,b,c}\left( z\right) \ast f\left(
z\right) =z+\overset{\infty }{\underset{n=1}{\dsum }}\frac{\left(
-c/4\right) ^{n}a_{n+1}z^{n+1}}{\left( 3/2\right) _{n}\left( k\right) _{n}}\
\ \left( k=p+\frac{b+2}{2}\notin 
%TCIMACRO{\U{2124} }%
%BeginExpansion
\mathbb{Z}
%EndExpansion
_{0}^{-},b,c,p\in 
%TCIMACRO{\U{2102} }%
%BeginExpansion
\mathbb{C}
%EndExpansion
\right) .  \label{g}
\end{equation}%
It can easily be seen that%
\begin{equation}
z\left( S_{k+1}^{c}f\left( z\right) \right) ^{\prime }=kS_{k}^{c}f\left(
z\right) -\left( k-1\right) S_{k+1}^{c}f\left( z\right) .  \label{h}
\end{equation}%
\textbf{Special cases}

(i) \ \ \ For $b=1,\ c=1,$ we have the operator $\mathcal{S}%
_{p}:A\rightarrow A$ related with struve function of order $p.$ It is given
as%
\begin{eqnarray*}
\mathcal{S}_{p}f\left( z\right) &=&\varphi _{p,1,1}\left( z\right) \ast
f\left( z\right) =\left[ 2^{p}\sqrt{\pi }\Gamma \left( p+3/2\right)
z^{\left( -p+1\right) /2}M_{p,1,1}\left( \sqrt{z}\right) \right] \ast
f\left( z\right) \\
&=&z+\overset{\infty }{\underset{n=1}{\dsum }}\frac{\left( -1/4\right)
^{n}a_{n+1}z^{n+1}}{\left( 3/2\right) _{n}\left( p+3/2\right) _{n}}
\end{eqnarray*}%
and the recursive relation 
\begin{equation*}
z\left[ \mathcal{S}_{p+1}f\left( z\right) \right] ^{\prime }=\left(
p+3/2\right) \mathcal{S}_{p}f\left( z\right) -\left( p+1/2\right) \mathcal{S}%
_{p+1}f\left( z\right)
\end{equation*}
holds.

(ii) \ \ \ For $b=1,\ c=-1,$ we obtain the operator $\mathfrak{S}%
_{p}:A\rightarrow A$ related with modified struve function of order $p.$ It
is given as%
\begin{eqnarray*}
\mathfrak{S}_{p}f\left( z\right) &=&\varphi _{p,1,-1}\left( z\right) \ast
f\left( z\right) =\left[ 2^{p}\sqrt{\pi }\Gamma \left( p+3/2\right)
z^{\left( -p+1\right) /2}M_{p,1,-1}\left( \sqrt{z}\right) \right] \ast
f\left( z\right) \\
&=&z+\overset{\infty }{\underset{n=1}{\dsum }}\frac{\left( 1/4\right)
^{n}a_{n+1}z^{n+1}}{\left( 3/2\right) _{n}\left( p+3/2\right) _{n}}
\end{eqnarray*}%
and the recursive relation%
\begin{equation*}
z\left[ \mathfrak{S}_{p+1}f\left( z\right) \right] ^{\prime }=\left(
p+3/2\right) \mathfrak{S}_{p}f\left( z\right) -\left( p+1/2\right) \mathfrak{%
S}_{p+1}f\left( z\right)
\end{equation*}%
holds.

We define the following class of analytic functions by using the operator $%
S_{k}^{c}f\left( z\right) .$

\begin{definition}
Let $f\in A$. Then $f\in N_{k,c}^{\alpha }\left( \lambda ,\mu ,\phi \right) $
for $0<\mu <1,\lambda \in 
%TCIMACRO{\U{2102} }%
%BeginExpansion
\mathbb{C}
%EndExpansion
,k=p+\left( b+2\right) /2\neq 0,-1,-2,\ldots ,b,c,p\in 
%TCIMACRO{\U{2102} }%
%BeginExpansion
\mathbb{C}
%EndExpansion
,$ and $\left\vert \alpha \right\vert <\frac{\pi }{2},$ if and only if 
\begin{equation}
e^{i\alpha }\left\{ \left( 1+\lambda \right) \left( \frac{z}{%
S_{k+1}^{c}f\left( z\right) }\right) ^{\mu }-\lambda \frac{S_{k}^{c}f\left(
z\right) }{S_{k+1}^{c}f\left( z\right) }\left( \frac{z}{S_{k+1}^{c}f\left(
z\right) }\right) ^{\mu }\right\} \prec \cos \alpha \phi \left( z\right)
+i\sin \alpha ,  \label{i}
\end{equation}%
where $\phi \left( z\right) $ is a convex univalent function with $\phi
\left( 0\right) =1.$
\end{definition}

(i) \ \ \ For $\phi \left( z\right) =\frac{1+Az}{1+Bz},-1\leq B<A\leq 1,$ we
have the class $N_{k,c}^{\alpha }\left( \lambda ,\mu ,\frac{1+Az}{1+Bz}%
\right) ,$ which

consists of functions $f$ such that%
\begin{equation*}
J\left( \alpha ,c,k,f\left( z\right) \right) \prec \frac{1+Az}{1+Bz},
\end{equation*}%
where%
\begin{eqnarray*}
&&J\left( \alpha ,c,k,f\left( z\right) \right) \\
&=&\frac{1}{\cos \alpha }\left[ e^{i\alpha }\left\{ \left( 1+\lambda \right)
\left( \frac{z}{S_{k+1}^{c}f\left( z\right) }\right) ^{\mu }-\lambda \frac{%
S_{k}^{c}f\left( z\right) }{S_{k+1}^{c}f\left( z\right) }\left( \frac{z}{%
S_{k+1}^{c}f\left( z\right) }\right) ^{\mu }\right\} -i\sin \alpha \right] .
\end{eqnarray*}%
(ii) \ \ For\ $\phi \left( z\right) =\frac{1+z}{1-z},$ we have the class $%
N_{k,c}^{\alpha }\left( \lambda ,\mu ,\frac{1+z}{1-z}\right) .$ That is $%
f\in $ $N_{k,c}^{\alpha }\left( \lambda ,\mu ,\frac{1+z}{1-z}\right) $ if%
\begin{equation*}
J\left( \alpha ,c,k,f\left( z\right) \right) \prec \frac{1+z}{1-z}.
\end{equation*}%
Since it is well known that for a function $p\left( z\right) \prec \frac{1+z%
}{1-z},$ then $\func{Re}p\left( z\right) >0.$ This implies that $f\in $ $%
N_{k,c}^{\alpha }\left( \lambda ,\mu ,\frac{1+z}{1-z}\right) $ if%
\begin{equation*}
\func{Re}J\left( \alpha ,c,k,f\left( z\right) \right) >0.
\end{equation*}

\begin{lemma}
\label{3}\cite{lio} Let $F\ $be analytic and convex in $E$. If $f,\ g\in A\ $%
and $f,\ g\prec F.\ $Then%
\begin{equation*}
\sigma f+\left( 1-\sigma \right) g\prec F,\ \ \ 0\leq \sigma \leq 1.
\end{equation*}
\end{lemma}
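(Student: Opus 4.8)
The plan is to exploit the geometric meaning of subordination to a univalent dominant. Since $F$ is convex in $E$, it is in particular univalent, and a standard fact is that for univalent $F$ one has $f\prec F$ if and only if $f(0)=F(0)$ and $f(E)\subseteq F(E)$. First I would record this characterization: the forward direction is immediate from the representation $f=F\circ w$ with $w$ a Schwarz function, while the converse follows by setting $w=F^{-1}\circ f$, which is analytic precisely because $F$ is univalent, satisfies $w(0)=0$, and maps $E$ into $E$, so that $|w(z)|<1$.

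Applying this characterization to the two hypotheses $f\prec F$ and $g\prec F$, I obtain $f(0)=g(0)=F(0)$ together with $f(E)\subseteq F(E)$ and $g(E)\subseteq F(E)$. Now set $h(z)=\sigma f(z)+(1-\sigma)g(z)$ with $0\le\sigma\le1$. Clearly $h$ is analytic in $E$ and $h(0)=\sigma F(0)+(1-\sigma)F(0)=F(0)$. The essential point is that $F$ being convex means $F(E)$ is a convex domain; hence for each fixed $z\in E$ the value $h(z)$, being the convex combination $\sigma f(z)+(1-\sigma)g(z)$ of the two points $f(z),g(z)\in F(E)$, again lies in $F(E)$. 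Thus $h(E)\subseteq F(E)$.

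Having verified $h(0)=F(0)$ and $h(E)\subseteq F(E)$, I would invoke the characterization once more, now in the reverse direction, to conclude $h\prec F$, which is exactly the assertion $\sigma f+(1-\sigma)g\prec F$.

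The only place that requires care — and indeed the real content of the lemma — is the step $h(E)\subseteq F(E)$: this is precisely where convexity of $F$ enters, and the argument would break down for a merely univalent dominant. One should also observe that the convex combination of two points of the \emph{open} convex set $F(E)$ remains in that open set and not merely in its closure; this guarantees that the associated map $w=F^{-1}\circ h$ satisfies the strict inequality $|w(z)|<1$ demanded by the definition of subordination, so that $h\prec F$ holds in the genuine sense.
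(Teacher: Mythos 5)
Your proof is correct and complete. There is nothing in the paper to compare it against: the lemma is stated with a citation to Liu's paper (in Chinese) and no proof is reproduced in the text. Your argument is the standard one for this fact --- the characterization that for a univalent dominant $F$, subordination $f\prec F$ is equivalent to $f(0)=F(0)$ together with $f(E)\subseteq F(E)$, combined with the observation that convexity of $F$ means exactly that $F(E)$ is a convex domain, hence closed under the pointwise convex combination $\sigma f(z)+(1-\sigma)g(z)$ --- and every step is sound, including your remark that the combination lands in the open set $F(E)$ itself (convexity of a set, open or not, is precisely closure under convex combinations of its points), so that $w=F^{-1}\circ h$ genuinely satisfies $|w(z)|<1$.
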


\begin{lemma}
\label{4}\cite{hallen} Let $h$ be convex in $E$ with $h(0)=a$ and $\beta \in 
%TCIMACRO{\U{2102} }%
%BeginExpansion
\mathbb{C}
%EndExpansion
$ such that $\func{Re}\beta \geq 0$. If $p\in H\left[ a,n\right] $ and%
\begin{equation*}
p(z)+\frac{zp%
%TCIMACRO{\U{b4}}%
%BeginExpansion
{\acute{}}%
%EndExpansion
(z)}{\beta }\prec h\left( z\right) ,
\end{equation*}%
then $p(z)\prec q\left( z\right) \prec h\left( z\right) ,$ where%
\begin{equation*}
q\left( z\right) =\frac{\beta }{nz^{\beta /n}}\dint\limits_{0}^{z}h\left(
t\right) t^{\beta /n-1}dt
\end{equation*}
\end{lemma}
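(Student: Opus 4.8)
The plan is to prove the two subordinations $q\prec h$ and $p\prec q$ separately, and the function $q$ is the natural candidate because it is exactly the solution obtained by replacing $\prec$ with $=$ in the hypothesis. Setting $\delta=\beta/n$ and differentiating the defining integral, one checks that $q$ satisfies
\[
q(z)+\frac{zq'(z)}{\delta}=h(z),\qquad\text{equivalently}\qquad q(z)+\frac{n\,zq'(z)}{\beta}=h(z),
\]
with $q(0)=h(0)=a$. So $q$ is the extremal (best dominant) attached to the differential expression, and establishing that it dominates $p$ and is dominated by $h$ is the whole content.

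First I would establish $q\prec h$. Substituting $t=zs$ in the integral and using $\int_{0}^{1}\delta s^{\delta-1}\,ds=1$ (valid for $\operatorname{Re}\delta>0$, with the borderline $\operatorname{Re}\beta=0$ handled by a continuity argument), I can write $q(z)=\int_{0}^{1}h(zs)\,d\mu(s)$, where $d\mu(s)=\delta s^{\delta-1}\,ds$ is a probability measure on $[0,1]$. For each fixed $s\in[0,1]$ the map $z\mapsto h(sz)$ is subordinate to $h$, since $sz$ is a Schwarz function. Because $h$ is convex, $q(z)$ is an average of functions each subordinate to the convex function $h$; the integral version of Lemma \ref{3} (convex combinations of functions subordinate to a convex function stay subordinate, passing to the limit of Riemann sums) then yields $q\prec h$.

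The harder part will be $p\prec q$, and here I would use the admissible-function (boundary point) method of Miller and Mocanu. Suppose $p\not\prec q$. Since $p\in H[a,n]$ and $q$ is univalent with $q(0)=a$, there exist $z_{0}\in E$, a boundary point $\zeta_{0}\in\partial E$, and an integer $m\geq n$ with $p(z_{0})=q(\zeta_{0})\in\partial q(E)$ and $z_{0}p'(z_{0})=m\,\zeta_{0}q'(\zeta_{0})$. Feeding these into the hypothesis expression and using $q+n\zeta q'/\beta=h$ to eliminate $\zeta_{0}q'(\zeta_{0})$ gives
\[
p(z_{0})+\frac{z_{0}p'(z_{0})}{\beta}=q(\zeta_{0})+\frac{m}{n}\bigl(h(\zeta_{0})-q(\zeta_{0})\bigr)=h(\zeta_{0})+\Bigl(\tfrac{m}{n}-1\Bigr)\bigl(h(\zeta_{0})-q(\zeta_{0})\bigr).
\]

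The key obstacle is this geometric step: I must show the displayed value fails to lie in the open set $h(E)$, contradicting the subordination hypothesis. Since $q\prec h$ we have $q(\zeta_{0})\in\overline{h(E)}$, while $h(\zeta_{0})$ is a boundary point of the convex domain $h(E)$; the value sits on the ray from $q(\zeta_{0})$ through $h(\zeta_{0})$, pushed past $h(\zeta_{0})$ by the factor $m/n\geq 1$. Convexity of $h(E)$ forces any such point (the boundary point itself when $m=n$, a point outside $\overline{h(E)}$ when $m>n$) to avoid the open set $h(E)$, the required contradiction, and hence $p\prec q$. The care needed lies precisely in justifying the boundary-point lemma — that the multiplier $m$ is a real integer with $m\geq n$ — and in the convex-geometry conclusion that extending outward past a boundary point of a convex domain does not re-enter it.
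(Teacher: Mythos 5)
The paper never proves this lemma --- it is imported verbatim from Hallenbeck and Ruscheweyh \cite{hallen} (see also Miller--Mocanu \cite{miler}, Theorem 3.1b) --- so your attempt can only be judged against the standard literature proof, whose architecture you have essentially reproduced: derive the differential equation $q(z)+nzq^{\prime }(z)/\beta =h(z)$, prove $q\prec h$, and prove $p\prec q$ by the Miller--Mocanu boundary-point lemma together with a supporting-line argument in the convex domain $h(E)$. Your algebra at the contact point and the final convex-geometry step (a point $h(\zeta _{0})+t\left( h(\zeta _{0})-q(\zeta _{0})\right) $ with $t\geq 0$, $q(\zeta _{0})\in \overline{h(E)}$ and $h(\zeta _{0})\in \partial h(E)$, cannot lie in the open convex set $h(E)$) are both correct.

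There are, however, two genuine gaps. First, your proof of $q\prec h$ is valid only for real $\beta $: when $\beta $ is non-real with $\operatorname{Re}\beta \geq 0$ --- which the lemma explicitly allows --- the density $\delta s^{\delta -1}ds$, $\delta =\beta /n$, is a \emph{complex} measure of total mass $1$, not a probability measure, so $q(z)$ is not a convex average of the points $h(zs)$ and the averaging argument collapses. This is not a technicality: handling complex $\beta $ is the substantive content of the Hallenbeck--Ruscheweyh theorem, and it is telling that your argument never uses $\operatorname{Re}\beta \geq 0$ except for convergence. The repair is to run the same boundary-point argument on the pair $(q,h)$: if $q\not\prec h$, one obtains $q(z_{0})=h(\zeta _{0})$ and $z_{0}q^{\prime }(z_{0})=m\zeta _{0}h^{\prime }(\zeta _{0})$, whence $h(z_{0})=h(\zeta _{0})+(nm/\beta )\zeta _{0}h^{\prime }(\zeta _{0})$; since $\zeta _{0}h^{\prime }(\zeta _{0})$ points along the outward normal to $\partial h(E)$ at $h(\zeta _{0})$ and $\operatorname{Re}(1/\beta )\geq 0$, this point lies outside $h(E)$, contradicting $z_{0}\in E$ --- and this is exactly where the hypothesis $\operatorname{Re}\beta \geq 0$ does its work. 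Second, your application of the boundary lemma to $(p,q)$ assumes what must be proved: that lemma requires $q$ to be univalent on $\overline{E}$ (off any poles) with $q^{\prime }(\zeta _{0})\neq 0$, and the univalence --- in fact convexity --- of $q$ is itself a nontrivial conclusion of the theorem, not a given; moreover the boundary values $h(\zeta _{0})$, $q(\zeta _{0})$, $q^{\prime }(\zeta _{0})$ need not exist for an arbitrary convex $h$, which is why the standard proof first replaces $h(z)$ by $h(\rho z)$, proves the statement for each $\rho <1$, and lets $\rho \rightarrow 1^{-}$. (A minor further point: the multiplier $m$ in the Miller--Mocanu lemma is a real number $\geq n$, not necessarily an integer; this is harmless here since you only use $m/n\geq 1$.)
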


and $q\left( z\right) $ is the best dominant.

\begin{lemma}
\label{1}\cite{abr}. Let $a,\ b\ $and\ $c\neq 0,-1,-2\ldots \ $be complex
numbers.\ Then, for $\func{Re}c>\func{Re}b>0$%
\begin{eqnarray*}
\left( i\right) \ \ \ _{2}F_{1}\left( a,b,c;z\right) &=&\frac{\Gamma \left(
c\right) }{\Gamma \left( c-b\right) \Gamma \left( b\right) }\underset{0}{%
\overset{1}{\dint }}t^{b-1}\left( 1-t\right) ^{c-b-1}\left( 1-tz\right)
^{-a}dt, \\
\left( ii\right) \ \ \ _{2}F_{1}\left( a,b,c;z\right) &=&\ _{2}F_{1}\left(
b,a,c;z\right) , \\
\left( iii\right) \ \ _{2}F_{1}\left( a,b,c;z\right) &=&\left( 1-z\right)
^{-a}\ _{2}F_{1}\left( a,c-b,c;\frac{z}{z-1}\right) .
\end{eqnarray*}
\end{lemma}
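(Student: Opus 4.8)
The plan is to treat the three parts in the natural order, deriving $(i)$ from scratch, reading off $(ii)$ immediately, and then obtaining $(iii)$ as a consequence of $(i)$ via a change of variable. For $(i)$, first I would expand the factor $\left(1-tz\right)^{-a}$ as a binomial series. For $\left\vert z\right\vert $ sufficiently small and $t\in\left[0,1\right]$ we have
\[
\left(1-tz\right)^{-a}=\overset{\infty}{\underset{n=0}{\dsum}}\frac{\left(a\right)_{n}}{n!}\left(tz\right)^{n},
\]
and this series converges uniformly in $t$, which justifies integrating term by term. Interchanging sum and integral gives
\[
\underset{0}{\overset{1}{\dint}}t^{b-1}\left(1-t\right)^{c-b-1}\left(1-tz\right)^{-a}dt=\overset{\infty}{\underset{n=0}{\dsum}}\frac{\left(a\right)_{n}}{n!}z^{n}\underset{0}{\overset{1}{\dint}}t^{b+n-1}\left(1-t\right)^{c-b-1}dt.
\]
The inner integral is the Beta integral $B\left(b+n,c-b\right)=\frac{\Gamma\left(b+n\right)\Gamma\left(c-b\right)}{\Gamma\left(c+n\right)}$, which converges precisely because $\func{Re}c>\func{Re}b>0$. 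Rewriting $\Gamma\left(b+n\right)=\left(b\right)_{n}\Gamma\left(b\right)$ and $\Gamma\left(c+n\right)=\left(c\right)_{n}\Gamma\left(c\right)$ collapses the right-hand side to $\frac{\Gamma\left(b\right)\Gamma\left(c-b\right)}{\Gamma\left(c\right)}\,{}_{2}F_{1}\left(a,b,c;z\right)$, and multiplying through by $\frac{\Gamma\left(c\right)}{\Gamma\left(c-b\right)\Gamma\left(b\right)}$ yields the claimed identity for small $\left\vert z\right\vert $; the identity then extends to the cut plane by analytic continuation, since both sides are analytic there.

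Part $(ii)$ is immediate: in the series $\overset{\infty}{\underset{n=0}{\dsum}}\frac{\left(a\right)_{n}\left(b\right)_{n}}{\left(c\right)_{n}n!}z^{n}$ the numerator factor $\left(a\right)_{n}\left(b\right)_{n}$ is symmetric in $a$ and $b$, so interchanging them leaves $_{2}F_{1}$ unchanged, and no analytic argument is needed.

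For $(iii)$, I would start from the integral in $(i)$ and substitute $t\mapsto 1-t$, which turns the integral into
\[
\underset{0}{\overset{1}{\dint}}t^{c-b-1}\left(1-t\right)^{b-1}\left(1-\left(1-t\right)z\right)^{-a}dt.
\]
The key algebraic step is to write $1-\left(1-t\right)z=\left(1-z\right)\left(1-t\,\frac{z}{z-1}\right)$, so that $\left(1-\left(1-t\right)z\right)^{-a}=\left(1-z\right)^{-a}\left(1-t\,\frac{z}{z-1}\right)^{-a}$ and the factor $\left(1-z\right)^{-a}$ pulls outside the integral. What remains is exactly an Euler integral of the form $(i)$, but with $b$ replaced by $c-b$, with $c$ unchanged, and with argument $\frac{z}{z-1}$; invoking $(i)$ in reverse identifies it with $\frac{\Gamma\left(c-b\right)\Gamma\left(b\right)}{\Gamma\left(c\right)}\,{}_{2}F_{1}\left(a,c-b,c;\frac{z}{z-1}\right)$, and the prefactors cancel to give the stated transformation. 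The main obstacle throughout is not the algebra but the analytic bookkeeping: ensuring the term-by-term integration in $(i)$ is legitimate, and then propagating the identities, first established only for small $\left\vert z\right\vert $ where every series converges, to the full domain by analytic continuation.
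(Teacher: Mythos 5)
Your proposal is correct, but there is nothing in the paper to compare it against: Lemma \ref{1} is quoted verbatim from Abramowitz and Stegun \cite{abr} as a known classical fact, and the paper supplies no proof of it (nor is one expected, since the lemma only serves as a tool for the corollaries in Section 2). What you have reconstructed is the standard textbook argument: part $(i)$ is Euler's integral representation, proved exactly as you do by expanding $\left(1-tz\right)^{-a}$ binomially, integrating term by term against the Beta weight, and using $\Gamma\left(b+n\right)=\left(b\right)_{n}\Gamma\left(b\right)$, $\Gamma\left(c+n\right)=\left(c\right)_{n}\Gamma\left(c\right)$; part $(ii)$ is the trivial symmetry of the series; and part $(iii)$ is Pfaff's transformation, obtained from $(i)$ by the involution $t\mapsto 1-t$ together with the factorization $1-\left(1-t\right)z=\left(1-z\right)\left(1-t\,\frac{z}{z-1}\right)$. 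All the steps check out, including the hypothesis bookkeeping in $(iii)$: applying $(i)$ with $b$ replaced by $c-b$ needs $\func{Re}c>\func{Re}\left(c-b\right)>0$, which is equivalent to the stated condition $\func{Re}c>\func{Re}b>0$. Two small points of care that you wave at but could state more precisely: the uniform convergence of the binomial series on $t\in\left[0,1\right]$ holds for each fixed $\left\vert z\right\vert<1$ (not just ``sufficiently small''), and since the weight $t^{b-1}\left(1-t\right)^{c-b-1}$ may be unbounded at the endpoints, the interchange of sum and integral should be justified by pairing uniform convergence of the bounded factor with absolute integrability of the weight (or by dominated convergence); likewise, in $(iii)$ the argument $\frac{z}{z-1}$ can leave the unit disk even for $\left\vert z\right\vert<1$, so one genuinely needs the analytically continued form of $(i)$ on the cut plane $\mathbb{C}\setminus\left[1,\infty\right)$, which your closing remark on analytic continuation covers.
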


\begin{lemma}
\label{6}\cite{liu study} Let $-1\leq B_{1}\leq B_{2}<A_{2}\leq A_{1}\leq 1.$
Then%
\begin{equation*}
\frac{1+A_{2}z}{1+B_{2}z}\prec \frac{1+A_{1}z}{1+B_{1}z}.
\end{equation*}
\end{lemma}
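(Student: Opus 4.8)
The plan is to use the standard criterion for subordination to a univalent function. Since $g(z):=\frac{1+A_{1}z}{1+B_{1}z}$ is a non-degenerate M\"obius transformation it is univalent on $E$, and for $f(z):=\frac{1+A_{2}z}{1+B_{2}z}$ one has $f\prec g$ if and only if $f(0)=g(0)$ and $f(E)\subseteq g(E)$. The normalization $f(0)=g(0)=1$ is immediate, so the whole lemma reduces to the image inclusion $f(E)\subseteq g(E)$.

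First I would identify the two images. Because all coefficients are real, each of $f$ and $g$ carries $E$ onto a region symmetric about the real axis whose real-axis intercepts are the images of $z=\pm1$. Thus, when $B_{i}>-1$, the region $g(E)$ is the disk whose horizontal diameter has endpoints $g(-1)=\frac{1-A_{1}}{1-B_{1}}$ and $g(1)=\frac{1+A_{1}}{1+B_{1}}$ (and likewise for $f$); since $A_{i}>B_{i}$ the left endpoint is $\frac{1-A_{i}}{1-B_{i}}$ and the right endpoint is $\frac{1+A_{i}}{1+B_{i}}$. A disk symmetric about $\mathbb{R}$ is completely determined by this diameter, and one such disk lies inside another precisely when the diameters are nested. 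Hence $f(E)\subseteq g(E)$ is equivalent to the two scalar inequalities $\frac{1-A_{1}}{1-B_{1}}\le\frac{1-A_{2}}{1-B_{2}}$ and $\frac{1+A_{2}}{1+B_{2}}\le\frac{1+A_{1}}{1+B_{1}}$.

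The verification of these inequalities is the routine heart of the argument. All denominators are positive ($1-B_{i}>0$ because $B_{i}<A_{i}\le1$, and $1+B_{i}\ge0$), so after clearing them each inequality rearranges into a factored form with manifestly nonnegative summands, namely
\begin{equation*}
(A_{1}-A_{2})(1-B_{2})+(B_{2}-B_{1})(1-A_{2})\ge 0
\quad\text{and}\quad
(A_{1}-A_{2})(1+B_{2})+(B_{2}-B_{1})(1+A_{2})\ge 0,
\end{equation*}
each of which holds because $A_{1}\ge A_{2}$, $B_{2}\ge B_{1}$, $A_{2}\le1$, and $-1\le B_{2}$.

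I expect the main obstacle to be the degenerate boundary case $B_{1}=-1$ (and similarly $B_{2}=-1$), where $g(E)$ is no longer a disk but the half-plane $\{\operatorname{Re}w>\frac{1-A_{1}}{2}\}$, as a short computation of $\operatorname{Re}g$ on $|z|=1$ shows. The containment $f(E)\subseteq g(E)$ then reduces to comparing the leftmost point of $f(E)$ with this bounding line, which is governed by the same first inequality above, so no new work is needed once the case is isolated. As an alternative that sidesteps the image-geometry bookkeeping, I could instead exhibit the candidate Schwarz function explicitly as $w(z)=g^{-1}(f(z))=\frac{(A_{2}-B_{2})z}{(A_{1}-B_{1})+(A_{1}B_{2}-A_{2}B_{1})z}$, which satisfies $w(0)=0$, and then verify $|w(z)|<1$ on $E$ by a maximum-modulus estimate on $|z|=1$; that verification again collapses to the two inequalities displayed above (split according to the sign of $A_{1}B_{2}-A_{2}B_{1}$).
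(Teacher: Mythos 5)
The paper offers no proof of this lemma for you to be compared against: it is stated as a quoted result from Liu \cite{liu study}, so any correct argument here is necessarily ``different from the paper's.'' Judged on its own merits, your proof is correct. The key reduction is sound: $g(z)=\frac{1+A_{1}z}{1+B_{1}z}$ is a M\"obius map whose pole $-1/B_{1}$ lies outside $E$ (because $|B_{1}|\leq 1$), hence univalent there, and for univalent $g$ the subordination $f\prec g$ is indeed equivalent to $f(0)=g(0)$ together with $f(E)\subseteq g(E)$. The geometry is also right: with real coefficients each image is a disk (or a half-plane when $B_{i}=-1$) symmetric about the real axis whose real-axis intercepts are $\frac{1-A_{i}}{1-B_{i}}$ and $\frac{1+A_{i}}{1+B_{i}}$, and two such disks are nested exactly when their horizontal diameters are nested intervals (compare centers and radii). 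Your two factored expressions do equal the relevant cross-multiplied differences; for instance $(1-A_{2})(1-B_{1})-(1-A_{1})(1-B_{2})=(A_{1}-A_{2})(1-B_{2})+(B_{2}-B_{1})(1-A_{2})\geq 0$ under the stated hypotheses, and similarly for the right endpoints. Note also that the hypotheses force $B_{1}\leq B_{2}<A_{2}\leq 1$, so $B_{i}=1$ cannot occur and $B_{i}=-1$ is the only degenerate case, which you isolate correctly. Your alternative route via the explicit Schwarz function works as well: the requirement $|w(z)|<1$ on $E$ amounts to $A_{2}-B_{2}\leq (A_{1}-B_{1})-|A_{1}B_{2}-A_{2}B_{1}|$, which splits according to the sign of $A_{1}B_{2}-A_{2}B_{1}$ into exactly the two displayed inequalities, and this same bound places the pole of $w$ outside the closed disk, so $w$ is analytic in $E$ and the maximum-modulus argument applies.
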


\begin{lemma}
\label{7}\cite{miler} Let the function $g(z)$ be analytic and univalent in $%
E $ and let the functions $\theta (w)$ and $\varphi (w)$ be analytic in a
domain $D$ containing $g(E)$, with $\theta (w)\neq 0\ (w\in g(E))$. Set

$Q(z)=zg^{\prime }(z)\varphi (g(z))$ and $h(z)=\theta (g(z))+Q(z)$ and
suppose that

(i) $Q(z)$is univalently starlike in $E$

(ii) $\func{Re}\frac{zh^{\prime }(z)}{Q(z)}=\func{Re}\left\{ \frac{\theta
^{\prime }(g(z))}{\varphi (g(z))}+\frac{zQ^{\prime }(z)}{Q(z)}\right\} >0$ $%
(z\in E).\ $If $q(z)\ $is analytic in$\ E\ $with$\ q(0)=g(0),q(E)\subset D\ $%
and 
\begin{equation*}
\theta (q(z))+zq^{\prime }(z)\varphi (q(z))\prec \theta (g(z))+zg^{\prime
}(z)\varphi (g(z))=h(z)\ (z\in E),
\end{equation*}

then $q(z)\prec g(z)$ $(z\in E)$ and $g(z)$ is the best dominant.
\end{lemma}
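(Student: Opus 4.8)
The plan is to prove this by contradiction, exploiting the univalence of $g$ together with the geometric content of hypotheses (i) and (ii). Writing $\psi(w,s)=\theta(w)+s\varphi(w)$, the hypothesis reads $\psi(q(z),zq'(z))\prec h(z)$ with $h(z)=\psi(g(z),zg'(z))$, so that every value $\psi(q(z),zq'(z))$ must lie in the domain $h(E)$. First I would carry out the standard normalization: by replacing $g$ and $q$ with the dilations $g_{\rho}(z)=g(\rho z)$ and $q_{\rho}(z)=q(\rho z)$ for $\rho<1$, proving the conclusion for each $\rho$, and then letting $\rho\to 1^{-}$, I may assume that $g,q,h$ extend continuously to $\overline{E}$, that $g$ and $Q$ are univalent on $\overline{E}$, and that $g'(\zeta)\neq 0$ on $|\zeta|=1$.

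Assuming for contradiction that $q\not\prec g$, the fact that $q(0)=g(0)$ and $g$ is univalent forces $q(E)\not\subset g(E)$. I would then invoke the boundary-point lemma of the theory of differential subordinations: there exist $z_{0}\in E$ and $\zeta_{0}\in\partial E$ with $q(z_{0})=g(\zeta_{0})$ and $z_{0}q'(z_{0})=m\,\zeta_{0}g'(\zeta_{0})$ for some real $m\geq 1$. Substituting these relations into $\psi$ gives
\begin{equation*}
\psi\bigl(q(z_{0}),z_{0}q'(z_{0})\bigr)=\theta\bigl(g(\zeta_{0})\bigr)+m\,\zeta_{0}g'(\zeta_{0})\,\varphi\bigl(g(\zeta_{0})\bigr)=\theta\bigl(g(\zeta_{0})\bigr)+m\,Q(\zeta_{0})=h(\zeta_{0})+(m-1)Q(\zeta_{0}).
\end{equation*}
The heart of the argument is to show that this point lies outside $h(E)$, which contradicts $\psi(q(z),zq'(z))\prec h(z)$. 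Here (i) and (ii) enter: condition (ii), $\operatorname{Re}\{\zeta h'(\zeta)/Q(\zeta)\}>0$, says precisely that $h$ is close-to-convex with respect to the starlike function $Q$ of (i). I would use the linear-accessibility property of close-to-convex domains to conclude that the ray $\{h(\zeta_{0})+tQ(\zeta_{0}):t>0\}$ lies in $\mathbb{C}\setminus h(E)$; since $m-1\geq 0$, the displayed point sits on this ray and hence outside $h(E)$, the desired contradiction. Therefore $q\prec g$.

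Finally, $g$ itself satisfies $\psi(g(z),zg'(z))=h(z)\prec h(z)$, so $g$ is a solution and consequently a dominant; and since any dominant $\tilde q$ must dominate every solution, in particular $g\prec\tilde q$, it follows that $g$ is the best dominant. I expect the main obstacle to be the geometric step establishing that the ray emanating from $h(\zeta_{0})$ in the direction $Q(\zeta_{0})$ avoids $h(E)$: this is exactly where the starlikeness of $Q$ and the positivity in (ii) must be combined, via the fact that the boundary tangent direction $i\zeta h'(\zeta)$ of $h(\partial E)$, together with $\operatorname{Re}\{\zeta h'/Q\}>0$, pins down the outward direction along which $h(E)$ cannot be re-entered. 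The boundary-point lemma used in the second paragraph is a nontrivial but standard input from the theory.
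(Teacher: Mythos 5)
The paper offers no proof of this lemma at all: it is imported verbatim from Miller and Mocanu's book \cite{miler} (it is their Theorem 3.4h), so your attempt can only be measured against the standard proof given there. Your skeleton is in fact that proof: dilation normalization, contradiction via the Miller--Mocanu boundary-point lemma producing $z_{0}\in E$, $\zeta_{0}\in \partial E$ and $m\geq 1$ with $q(z_{0})=g(\zeta_{0})$, $z_{0}q^{\prime }(z_{0})=m\zeta_{0}g^{\prime }(\zeta_{0})$, the computation $\psi (q(z_{0}),z_{0}q^{\prime }(z_{0}))=h(\zeta_{0})+(m-1)Q(\zeta_{0})$, exclusion of that point from $h(E)$, and the closing best-dominant argument (which is correct: $g$ is itself a solution, hence subordinate to every dominant).

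The genuine gap is exactly at the step you flag as the main obstacle, and your proposed tool does not close it. The linear-accessibility property of close-to-convex domains says only that $\mathbb{C}\setminus h(E)$ is a union of \emph{some} pairwise non-crossing half-lines; it does not identify those half-lines, so it cannot tell you that the particular ray $\left\{ h(\zeta_{0})+tQ(\zeta_{0}):t\geq 0\right\} $ avoids $h(E)$. That identification is essentially the full content of what must be proved, so as written the appeal is circular. The standard way to finish --- and the real purpose of hypotheses (i) and (ii) --- is a subordination-chain argument: set $L(z,t)=h(z)+tQ(z)$ for $t\geq 0$; then
\begin{equation*}
\operatorname{Re}\frac{z\,\partial L/\partial z}{\partial L/\partial t}=\operatorname{Re}\frac{zh^{\prime }(z)}{Q(z)}+t\operatorname{Re}\frac{zQ^{\prime }(z)}{Q(z)}>0
\end{equation*}
by (ii) and (i), while $\partial L/\partial t=Q(z)$ and $Q^{\prime }(0)\neq 0$, so Pommerenke's criterion makes $L$ a Loewner (subordination) chain. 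Hence each $L(\cdot ,t)$ is univalent and $h(E)=L(E,0)\subset L(E,m-1)$; after your dilation normalization $L(\cdot ,m-1)$ is univalent on $\overline{E}$, so its boundary value $L(\zeta_{0},m-1)=h(\zeta_{0})+(m-1)Q(\zeta_{0})$ lies outside $L(E,m-1)\supset h(E)$, giving the contradiction. With that substitution your argument becomes the Miller--Mocanu proof. Two minor remarks: the boundary-point lemma you invoke is itself Lemma 2.2d of the same book, a legitimate but substantial import; and the nonvanishing hypothesis in the statement should be on $\varphi $ rather than $\theta $, since (ii) and the starlikeness of $Q$ already require $\varphi \neq 0$ on $g(E)$.
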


\section{Main results}

\begin{theorem}
\label{5}Let $f\in N_{k,c}^{\alpha }\left( \lambda ,\mu ,\phi \right) .$
Then for $Re\frac{\mu k}{\lambda }\geq 0,$%
\begin{equation*}
e^{i\alpha }\left( \frac{z}{S_{k+1}^{c}f\left( z\right) }\right) ^{\mu
}\prec \frac{\mu k}{\lambda }\cos \alpha z^{-\frac{\mu k}{\lambda }%
}\dint\limits_{0}^{z}\phi \left( t\right) t^{\frac{\mu k}{\lambda }%
-1}dt+i\sin \alpha \prec \left( \cos \alpha \right) \phi \left( z\right)
+i\sin \alpha .
\end{equation*}%
This result is the best possible.
\end{theorem}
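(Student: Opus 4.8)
The plan is to recast the defining subordination (\ref{i}) as a first-order differential subordination and then invoke Lemma \ref{4}. First I would introduce
$$p(z)=e^{i\alpha}\left(\frac{z}{S_{k+1}^{c}f(z)}\right)^{\mu}.$$
Since $S_{k+1}^{c}f(z)=z+\cdots$, the quotient $z/S_{k+1}^{c}f(z)$ is analytic and equal to $1$ at the origin, so $p$ is analytic with $p(0)=e^{i\alpha}$; that is, $p\in H[e^{i\alpha},1]$, with the order $n=1$ that will feed into the lemma.

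The key step is the identity that turns the bracket in (\ref{i}) into $p(z)+\frac{\lambda}{\mu k}zp'(z)$. Logarithmic differentiation of $p$ gives
$$\frac{zp'(z)}{p(z)}=\mu\left(1-\frac{z\left(S_{k+1}^{c}f(z)\right)'}{S_{k+1}^{c}f(z)}\right),$$
and substituting the recurrence (\ref{h}), which yields $\dfrac{z(S_{k+1}^{c}f(z))'}{S_{k+1}^{c}f(z)}=k\dfrac{S_{k}^{c}f(z)}{S_{k+1}^{c}f(z)}-(k-1)$, I obtain $zp'(z)=\mu k\,p(z)\left(1-\frac{S_{k}^{c}f(z)}{S_{k+1}^{c}f(z)}\right)$. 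A direct rearrangement then shows that the left-hand side of (\ref{i}) equals exactly $p(z)+\frac{\lambda}{\mu k}zp'(z)$, so (\ref{i}) becomes
$$p(z)+\frac{\lambda}{\mu k}zp'(z)\prec\cos\alpha\,\phi(z)+i\sin\alpha=:h(z).$$
This algebraic identification, together with the bookkeeping of the constant $\beta$, is the only real obstacle; everything after it is a routine appeal to the lemma.

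Finally I would apply Lemma \ref{4} with $\beta=\mu k/\lambda$ (so that $1/\beta=\lambda/\mu k$) and $n=1$. The hypothesis $\operatorname{Re}(\mu k/\lambda)\geq 0$ is precisely $\operatorname{Re}\beta\geq 0$; and since $\left\vert\alpha\right\vert<\frac{\pi}{2}$ forces $\cos\alpha>0$, the function $h$ is an affine image of the convex univalent $\phi$, hence convex, with $h(0)=e^{i\alpha}=p(0)$. Lemma \ref{4} then gives $p(z)\prec q(z)\prec h(z)$, where $q$ is the best dominant and
$$q(z)=\beta z^{-\beta}\int_{0}^{z}h(t)\,t^{\beta-1}\,dt.$$
Evaluating the integral, the constant part contributes $\beta z^{-\beta}\,i\sin\alpha\int_{0}^{z}t^{\beta-1}\,dt=i\sin\alpha$, leaving exactly the middle expression of the statement once $\beta=\mu k/\lambda$ is inserted. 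Because $q$ is the best dominant, the chain $p\prec q\prec h$ is sharp, which establishes the asserted "best possible" claim.
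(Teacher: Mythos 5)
Your proof is correct and follows essentially the same route as the paper: the same identification of the bracket in (\ref{i}) with $p(z)+\frac{\lambda }{\mu k}zp^{\prime }(z)$ via the recurrence (\ref{h}), followed by Lemma \ref{4} with $\beta =\mu k/\lambda $ and $n=1$, with sharpness coming from $q$ being the best dominant. The only cosmetic difference is that the paper normalizes $p(z)=\frac{1}{\cos \alpha }\left\{ e^{i\alpha }\left( z/S_{k+1}^{c}f\left( z\right) \right) ^{\mu }-i\sin \alpha \right\} $ so that $p(0)=1$ and the dominant is $\phi $ itself, whereas you keep $p(0)=e^{i\alpha }$ and subordinate to the affine image $h(z)=\cos \alpha \,\phi (z)+i\sin \alpha $ (noting it is still convex); the two computations are equivalent.
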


\begin{proof}
Consider%
\begin{equation}
p(z)=\frac{1}{\cos \alpha }\left\{ e^{i\alpha }\left( \frac{z}{%
S_{k+1}^{c}f\left( z\right) }\right) ^{\mu }-i\sin \alpha \right\} .
\label{j}
\end{equation}%
Then $p$ is analytic in $E$ with $p(0)=1$. Therefore$,\ $we have%
\begin{equation*}
e^{i\alpha }\left( \frac{z}{S_{k+1}^{c}f\left( z\right) }\right) ^{\mu
}=\left( \cos \alpha \right) p(z)+i\sin \alpha \text{.}
\end{equation*}%
Differentiating both sides and using $\left( \ref{h}\right) $ and
simplifying, we obtain%
\begin{equation*}
\frac{\lambda \left( \cos \alpha \right) zp^{\prime }\left( z\right) }{\mu k}%
=\lambda e^{i\alpha }\left\{ \left( \frac{z}{S_{k+1}^{c}f\left( z\right) }%
\right) ^{\mu }-\frac{S_{k}^{c}f\left( z\right) }{S_{k+1}^{c}f\left(
z\right) }\left( \frac{z}{S_{k+1}^{c}f\left( z\right) }\right) ^{\mu
}\right\} .
\end{equation*}%
It follows from above equation and $\left( \ref{j}\right) $ that%
\begin{eqnarray*}
&&p\left( z\right) +\frac{\lambda }{\mu k}zp^{\prime }\left( z\right) \\
&=&\frac{1}{\cos \alpha }\left[ e^{i\alpha }\left\{ \left( 1+\lambda \right)
\left( \frac{z}{S_{k+1}^{c}f\left( z\right) }\right) ^{\mu }-\lambda \frac{%
S_{k}^{c}f\left( z\right) }{S_{k+1}^{c}f\left( z\right) }\left( \frac{z}{%
S_{k+1}^{c}f\left( z\right) }\right) ^{\mu }\right\} -i\sin \alpha \right] .
\end{eqnarray*}%
Since $f\in N_{k,c}^{\alpha }\left( \lambda ,\mu ,\phi \right) ,$ therefore%
\begin{equation*}
p\left( z\right) +\frac{\lambda }{\mu k}zp^{\prime }\left( z\right) \prec
\phi \left( z\right) .
\end{equation*}%
Now using Lemma \ref{4} for $\beta =\frac{\mu k}{\lambda }$ with $Re\frac{%
\mu k}{\lambda }\geq 0,$ we obtain the required result.
\end{proof}

\begin{corollary}
Let $f\in N_{k,c}^{\alpha }\left( \lambda ,\mu ,\frac{1+Az}{1+Bz}\right) .$
Then for $k,\lambda \in 
%TCIMACRO{\U{211d} }%
%BeginExpansion
\mathbb{R}
%EndExpansion
$ and $\frac{\mu k}{\lambda }\geq 0,$%
\begin{equation*}
e^{i\alpha }\left( \frac{z}{S_{k+1}^{c}f\left( z\right) }\right) ^{\mu
}\prec h\left( z\right) \cos \alpha +i\sin \alpha ,
\end{equation*}%
where%
\begin{equation*}
h\left( z\right) =\left\{ 
\begin{array}{c}
\frac{A}{B}+\left( 1-\frac{A}{B}\right) \left( 1+Bz\right) ^{-1}\
_{2}F_{1}\left( 1,1,\frac{\mu k}{\lambda }+1;\frac{Bz}{1+Bz}\right) ,\ \ \ \
B\neq 0, \\ 
1+\frac{\mu k}{\mu k+\lambda }Az,\ \ \ \ \ \ \ \ \ \ \ \ \ \ \ \ \ \ \ \ \ \
\ \ \ \ \ \ \ \ \ \ \ \ \ \ \ \ \ \ \ \ \ \ \ \ B=0.%
\end{array}%
\right.
\end{equation*}%
Further%
\begin{equation*}
\func{Re}\left[ e^{i\alpha }\left( \frac{z}{S_{k+1}^{c}f\left( z\right) }%
\right) ^{\mu }\right] >\left( \cos \alpha \right) h\left( -1\right) .
\end{equation*}
\end{corollary}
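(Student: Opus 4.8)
The plan is to read this off Theorem~\ref{5} by specializing $\phi$ and then evaluating the resulting integral in closed form. Writing $\beta=\mu k/\lambda$ for brevity and taking $\phi(z)=\frac{1+Az}{1+Bz}$ (which is convex univalent for $-1\le B<A\le1$), Theorem~\ref{5} immediately yields
\begin{equation*}
e^{i\alpha}\left(\frac{z}{S_{k+1}^{c}f(z)}\right)^{\mu}\prec h(z)\cos\alpha+i\sin\alpha,\qquad h(z)=\beta z^{-\beta}\dint_{0}^{z}\frac{1+At}{1+Bt}\,t^{\beta-1}\,dt,
\end{equation*}
so the entire content of the first assertion is the evaluation of this integral. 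For $B=0$ the integrand is $(1+At)t^{\beta-1}$, and termwise integration gives $h(z)=1+\frac{\beta}{\beta+1}Az=1+\frac{\mu k}{\mu k+\lambda}Az$, which is the second branch.

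For $B\neq0$ I would first split the integrand by the partial-fraction identity
\begin{equation*}
\frac{1+At}{1+Bt}=\frac{A}{B}+\left(1-\frac{A}{B}\right)\frac{1}{1+Bt},
\end{equation*}
so that the constant term contributes exactly $A/B$ after the operator $\beta z^{-\beta}\dint_0^z(\cdot)\,t^{\beta-1}\,dt$ is applied. For the remaining piece I substitute $t=zs$ to get $\beta\left(1-\frac{A}{B}\right)\dint_0^1 s^{\beta-1}(1+Bzs)^{-1}\,ds$, and then invoke the Euler integral of Lemma~\ref{1}(i) with $a=1$, $b=\beta$, $c=\beta+1$ and argument $-Bz$ (using $\Gamma(\beta+1)/\Gamma(\beta)=\beta$); this identifies the integral as ${}_2F_1(1,\beta,\beta+1;-Bz)$.

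It then remains to recast this hypergeometric function into the form stated in the corollary, and this is the one genuinely delicate step. The key is Lemma~\ref{1}(iii): applied with $a=1$, $b=\beta$, $c=\beta+1$ and argument $-Bz$, it turns ${}_2F_1(1,\beta,\beta+1;-Bz)$ into $(1+Bz)^{-1}\,{}_2F_1\!\left(1,1,\beta+1;\frac{Bz}{1+Bz}\right)$, because $c-b=1$ and $\frac{-Bz}{-Bz-1}=\frac{Bz}{1+Bz}$. Collecting the two pieces produces the first branch of $h$. One must match the four parameters and the argument of the transformation precisely; once that is done the rest is bookkeeping.

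Finally, for the real-part estimate I would use that $k,\lambda,A,B$ are real, so $h$ has real Taylor coefficients and hence $h(\overline z)=\overline{h(z)}$, making $h(E)$ symmetric about the real axis. Since $\phi$ is convex and $\beta\ge0$, the Bernardi-type integral transform defining $h$ preserves convexity, so $h$ is convex univalent; consequently $\func{Re}h$ attains its infimum over $\overline{E}$ at the leftmost boundary point $z=-1$, giving $\func{Re}h(z)>h(-1)$ for $z\in E$. Because $|\alpha|<\frac{\pi}{2}$ forces $\cos\alpha>0$ while $\func{Re}(i\sin\alpha)=0$, every point of the image of the dominant $h(z)\cos\alpha+i\sin\alpha$ has real part exceeding $(\cos\alpha)h(-1)$; the subordination above then yields $\func{Re}\!\left[e^{i\alpha}\left(\frac{z}{S_{k+1}^{c}f(z)}\right)^{\mu}\right]>(\cos\alpha)h(-1)$, completing the proof.
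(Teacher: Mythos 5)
Your proposal is correct, and for the first assertion it is essentially identical to the paper's proof: specialize Theorem \ref{5} to $\phi(z)=\frac{1+Az}{1+Bz}$, substitute $t=zu$, split the integrand by partial fractions so that the constant piece contributes $A/B$, recognize the remaining integral as ${}_{2}F_{1}\left(1,\frac{\mu k}{\lambda},\frac{\mu k}{\lambda}+1;-Bz\right)$ via the Euler integral of Lemma \ref{1}(i), and convert to the stated form by the transformation of Lemma \ref{1}(iii); the $B=0$ branch is the same termwise integration. The genuine difference is in how you obtain the bound $\func{Re}\left[e^{i\alpha}\left(\frac{z}{S_{k+1}^{c}f\left(z\right)}\right)^{\mu}\right]>\left(\cos\alpha\right)h\left(-1\right)$. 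The paper stays elementary: it writes the subordination as $h\left(w\left(z\right)\right)$ for a Schwarz function $w$, applies the pointwise estimate $\func{Re}\frac{1+Atw\left(z\right)}{1+Btw\left(z\right)}>\frac{1-At}{1-Bt}$ for each $t\in(0,1]$ (the known range of a M\"{o}bius map of the disk when $-1\leq B<A\leq 1$), integrates in $t$, and then checks sharpness via $h(-r)\rightarrow h(-1)$ as $r\rightarrow 1^{-}$. You instead import the fact that the Hallenbeck--Ruscheweyh/Bernardi-type transform of a convex function is itself convex---which is true (it is part of Miller--Mocanu \cite{miler}, Theorem 3.1b) but is not contained in the paper's Lemma \ref{4} as quoted, so it is an extra outside result---and combine it with the symmetry $h(\overline{z})=\overline{h(z)}$ coming from real coefficients to place the infimum of $\func{Re}\,h$ at $z=-1$; for that localization you also tacitly need that $h$ is increasing on $(-1,1)$, which follows from univalence together with $h^{\prime}(0)=\frac{\mu k}{\mu k+\lambda}\left(A-B\right)>0$. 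Both routes are sound: the paper's is self-contained and delivers the sharpness statement in the same computation, while yours is shorter if one is willing to cite the convexity of the best dominant as known.
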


\begin{proof}
Since $f\in N_{k,c}^{\alpha }\left( \lambda ,\mu ,\frac{1+Az}{1+Bz}\right) ,$
therefore from Theorem \ref{5}, we have%
\begin{equation}
e^{i\alpha }\left( \frac{z}{S_{k+1}^{c}f\left( z\right) }\right) ^{\mu
}\prec \frac{\mu k}{\lambda }\left( \cos \alpha \right) z^{-\frac{\mu k}{%
\lambda }}\dint\limits_{0}^{z}\frac{1+At}{1+Bt}t^{\frac{\mu k}{\lambda }%
-1}dt+i\sin \alpha .  \label{k}
\end{equation}%
Putting $t=zu$ and after simple calculations$,$ one can get%
\begin{equation*}
e^{i\alpha }\left( \frac{z}{S_{k+1}^{c}f\left( z\right) }\right) ^{\mu
}\prec \left\{ \frac{A}{B}+\frac{\mu k}{\lambda }\left( 1-\frac{A}{B}\right)
\dint\limits_{0}^{1}\left( 1+Buz\right) ^{-1}u^{\frac{\mu k}{\lambda }%
-1}dt\right\} \cos \alpha +i\sin \alpha .
\end{equation*}%
Now using Lemma \ref{1} for $a=1,\ b=\frac{\mu k}{\lambda },\ c=b+1$ and $%
B\neq 0,$ we obtain%
\begin{eqnarray*}
&&e^{i\alpha }\left( \frac{z}{S_{k+1}^{c}f\left( z\right) }\right) ^{\mu } \\
&\prec &\left( \frac{A}{B}+\left( 1-\frac{A}{B}\right) \left( 1+Bz\right)
^{-1}\ _{2}F_{1}\left( 1,1,\frac{\mu k}{\lambda }+1;\frac{Bz}{1+Bz}\right)
\right) \cos \alpha +i\sin \alpha .
\end{eqnarray*}%
For the case $B=0.$ It can easily be followed from $\left( \ref{k}\right) $
that%
\begin{eqnarray*}
e^{i\alpha }\left( \frac{z}{S_{k+1}^{c}f\left( z\right) }\right) ^{\mu }
&\prec &\left( \frac{\mu k}{\lambda }\dint\limits_{0}^{1}\left( 1+Atz\right)
t^{\frac{\mu k}{\lambda }-1}dt\right) \cos \alpha +i\sin \alpha . \\
&=&\frac{\mu k}{\lambda }\left\{ \left( \dint\limits_{0}^{1}t^{\frac{\mu k}{%
\lambda }-1}dt\right) +\dint\limits_{0}^{1}Azt^{\frac{\mu k}{\lambda }%
}dt\right\} \cos \alpha +i\sin \alpha . \\
&=&\left\{ 1+\frac{\mu k}{\mu k+\lambda }Az\right\} \cos \alpha +i\sin
\alpha .
\end{eqnarray*}%
Now we have to prove that $\func{Re}\left[ e^{i\alpha }\left( \frac{z}{%
S_{k+1}^{c}f\left( z\right) }\right) ^{\mu }\right] >\left( \cos \alpha
\right) h\left( -1\right) .$ From $\left( \ref{k}\right) ,$ we can have this
relation by using subordination%
\begin{equation*}
\frac{1}{\cos \alpha }\left\{ e^{i\alpha }\left( \frac{z}{S_{k+1}^{c}f\left(
z\right) }\right) ^{\mu }-i\sin \alpha \right\} =h\left( w\left( z\right)
\right) ,
\end{equation*}%
where $h\left( z\right) =\frac{\mu k}{\lambda }z^{-\frac{\mu k}{\lambda }%
}\dint\limits_{0}^{z}\frac{1+At}{1+Bt}t^{\frac{\mu k}{\lambda }-1}dt.\ $%
Therefore%
\begin{eqnarray*}
\func{Re}\left[ \frac{1}{\cos \alpha }\left\{ e^{i\alpha }\left( \frac{z}{%
S_{k+1}^{c}f\left( z\right) }\right) ^{\mu }\right\} \right] &=&\func{Re}%
\frac{\mu k}{\lambda }\dint\limits_{0}^{1}\frac{1+Atw\left( z\right) }{%
1+Btw\left( z\right) }t^{\frac{\mu k}{\lambda }-1}dt \\
&>&\frac{\mu k}{\lambda }\dint\limits_{0}^{1}\frac{1-At}{1-Bt}t^{\frac{\mu k%
}{\lambda }-1}dt \\
&=&h\left( -1\right) .
\end{eqnarray*}%
To show that this result is sharp, we have to prove that $\underset{%
\left\vert z\right\vert <1}{\inf }\left\{ \func{Re}h\left( z\right) \right\}
=h\left( -1\right) .$ Now%
\begin{equation*}
\func{Re}h\left( z\right) \geq \frac{\mu k}{\lambda }\dint\limits_{0}^{1}t^{%
\frac{\mu k}{\lambda }-1}\frac{1-Atr}{1-Btr}dt=h\left( -r\right) .
\end{equation*}%
Therefore $h\left( -r\right) \rightarrow h\left( -1\right) $ as $%
r\rightarrow 1^{-}.$
\end{proof}

\begin{theorem}
Let $e^{i\alpha }\left( \frac{z}{S_{k+1}^{c}f\left( z\right) }\right) ^{\mu
}\prec \phi \left( z\right) \cos \alpha +i\sin \alpha $ with $\phi \left(
z\right) =\frac{1+z}{1-z}.$ Then $f\in N_{k,c}^{\alpha }\left( \lambda ,\mu
,\phi \left( z\right) \right) $ for $\left\vert z\right\vert =r<-c+\sqrt{%
c^{2}+1},\ $where $c=\left\vert \frac{\lambda }{\mu k}\right\vert .$
\end{theorem}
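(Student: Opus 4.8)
The plan is to reuse the auxiliary function and the differential identity already established in the proof of Theorem~\ref{5}, and then to convert the desired class membership into a positivity (radius) estimate by means of the classical sharp bound on the derivative of a function with positive real part. First I would set
\begin{equation*}
p(z)=\frac{1}{\cos \alpha }\left\{ e^{i\alpha }\left( \frac{z}{S_{k+1}^{c}f\left( z\right) }\right) ^{\mu }-i\sin \alpha \right\} ,
\end{equation*}
so that $p$ is analytic in $E$ with $p(0)=1$. Exactly as in Theorem~\ref{5}, differentiating and using the recursion $\left( \ref{h}\right) $ gives
\begin{equation*}
p\left( z\right) +\frac{\lambda }{\mu k}zp^{\prime }\left( z\right) =J\left( \alpha ,c,k,f\left( z\right) \right) .
\end{equation*}
By the reformulation recorded after the Definition, proving $f\in N_{k,c}^{\alpha }\left( \lambda ,\mu ,\phi \right) $ with $\phi \left( z\right) =\frac{1+z}{1-z}$ is equivalent to establishing $\func{Re}\left[ p(z)+\frac{\lambda }{\mu k}zp^{\prime }(z)\right] >0$ on the disk $\left\vert z\right\vert <r$. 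Moreover, the hypothesis $e^{i\alpha }\left( z/S_{k+1}^{c}f(z)\right) ^{\mu }\prec \phi \left( z\right) \cos \alpha +i\sin \alpha $ translates precisely into $p\prec \frac{1+z}{1-z}$, whence $\func{Re}p(z)>0$ throughout $E$; that is, $p$ is a normalized Carath\'{e}odory function.

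The main step is then the well-known sharp estimate $\left\vert zp^{\prime }(z)\right\vert \leq \frac{2r}{1-r^{2}}\func{Re}p(z)$ valid for $\left\vert z\right\vert =r$ whenever $\func{Re}p>0$ and $p(0)=1$. Applying it yields
\begin{equation*}
\func{Re}\left[ p(z)+\frac{\lambda }{\mu k}zp^{\prime }(z)\right] \geq \func{Re}p(z)-\left\vert \frac{\lambda }{\mu k}\right\vert \left\vert zp^{\prime }(z)\right\vert \geq \func{Re}p(z)\left[ 1-\frac{2cr}{1-r^{2}}\right] ,
\end{equation*}
where $c=\left\vert \frac{\lambda }{\mu k}\right\vert $. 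Since $\func{Re}p(z)>0$, the left-hand side is positive exactly when the bracket is positive, i.e. when $r^{2}+2cr-1<0$. Solving the quadratic $r^{2}+2cr-1=0$, the relevant (positive) root is $r=-c+\sqrt{c^{2}+1}$, so positivity holds for all $\left\vert z\right\vert =r<-c+\sqrt{c^{2}+1}$, which is precisely the asserted radius.

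I expect the only genuine subtlety to be the invocation of the derivative bound $\left\vert zp^{\prime }(z)\right\vert \leq \frac{2r}{1-r^{2}}\func{Re}p(z)$: it is standard for functions of positive real part but is not among the lemmas quoted in the excerpt, so I would either cite it or derive it quickly by writing $p=\frac{1+\omega }{1-\omega }$ with $\omega $ a Schwarz function, computing $p^{\prime }=\frac{2\omega ^{\prime }}{(1-\omega )^{2}}$ and $\func{Re}p=\frac{1-\left\vert \omega \right\vert ^{2}}{\left\vert 1-\omega \right\vert ^{2}}$, and then applying the Schwarz--Pick inequality $\left\vert \omega ^{\prime }(z)\right\vert \leq \frac{1-\left\vert \omega (z)\right\vert ^{2}}{1-\left\vert z\right\vert ^{2}}$. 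Everything else, namely the identification of $p+\frac{\lambda }{\mu k}zp^{\prime }$ with $J$ and the solution of the quadratic in $r$, is routine.
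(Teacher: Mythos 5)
Your proposal is correct and follows essentially the same route as the paper: the same auxiliary function $p$, the same identity $p\left( z\right) +\frac{\lambda }{\mu k}zp^{\prime }\left( z\right) =J\left( \alpha ,c,k,f\left( z\right) \right) $ obtained from $\left( \ref{h}\right) $, the same estimate $\left\vert zp^{\prime }\left( z\right) \right\vert \leq \frac{2r\func{Re}p\left( z\right) }{1-r^{2}}$ (which the paper takes from \cite{goo} rather than reproving), and the same quadratic $1-2cr-r^{2}>0$ giving the radius $-c+\sqrt{c^{2}+1}$. The only differences are cosmetic: you offer a Schwarz--Pick derivation of the derivative bound where the paper simply cites Goodman's book, and the paper additionally remarks that the result is sharp for $p\left( z\right) =\frac{1+z}{1-z}$, a point the theorem statement does not require.
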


\begin{proof}
\textbf{\ }Let 
\begin{equation*}
e^{i\alpha }\left( \frac{z}{S_{k+1}^{c}f\left( z\right) }\right) ^{\mu
}=p\left( z\right) \cos \alpha +i\sin \alpha ,
\end{equation*}%
where $p\left( z\right) \prec \frac{1+z}{1-z}.$ Then from Theorem \ref{5},
we have%
\begin{eqnarray*}
&&p\left( z\right) +\frac{\lambda }{\mu k}zp^{\prime }\left( z\right) \\
&=&\frac{1}{\cos \alpha }\left[ e^{i\alpha }\left\{ \left( 1+\lambda \right)
\left( \frac{z}{S_{k+1}^{c}f\left( z\right) }\right) ^{\mu }-\lambda \frac{%
S_{k}^{c}f\left( z\right) }{S_{k+1}^{c}f\left( z\right) }\left( \frac{z}{%
S_{k+1}^{c}f\left( z\right) }\right) ^{\mu }\right\} -i\sin \alpha \right] .
\end{eqnarray*}%
Since $p\left( z\right) \prec \frac{1+z}{1-z},$ then it is well known that
see\ \cite{goo} 
\begin{equation}
\frac{1-r}{1+r}\leq \func{Re}p\left( z\right) \leq \left\vert p\left(
z\right) \right\vert \leq \frac{1+r}{1-r}\text{ and }\left\vert zp^{\prime
}\left( z\right) \right\vert \leq \frac{2r\func{Re}p\left( z\right) }{1-r^{2}%
}.  \label{m}
\end{equation}%
Thus, we have%
\begin{eqnarray*}
&&\func{Re}\frac{1}{\cos \alpha }\left[ e^{i\alpha }\left\{ \left( 1+\lambda
\right) \left( \frac{z}{S_{k+1}^{c}f\left( z\right) }\right) ^{\mu }-\lambda 
\frac{S_{k}^{c}f\left( z\right) }{S_{k+1}^{c}f\left( z\right) }\left( \frac{z%
}{S_{k+1}^{c}f\left( z\right) }\right) ^{\mu }\right\} -i\sin \alpha \right]
\\
&\geq &\func{Re}p\left( z\right) -\left\vert \frac{\lambda }{\mu k}%
\right\vert \left\vert zp^{\prime }\left( z\right) \right\vert
\end{eqnarray*}%
Using $\left( \ref{m}\right) ,$ we obtain%
\begin{eqnarray*}
&&\func{Re}\frac{1}{\cos \alpha }\left[ e^{i\alpha }\left\{ \left( 1+\lambda
\right) \left( \frac{z}{S_{k+1}^{c}f\left( z\right) }\right) ^{\mu }-\lambda 
\frac{S_{k}^{c}f\left( z\right) }{S_{k+1}^{c}f\left( z\right) }\left( \frac{z%
}{S_{k+1}^{c}f\left( z\right) }\right) ^{\mu }\right\} -i\sin \alpha \right]
\\
&\geq &\func{Re}p\left( z\right) -\frac{2cr\func{Re}p\left( z\right) }{%
1-r^{2}} \\
&=&\func{Re}p\left( z\right) \frac{1-r^{2}-2cr}{1-r^{2}}.
\end{eqnarray*}%
Since $p\left( z\right) \prec \frac{1+z}{1-z},$ therefore $\func{Re}p\left(
z\right) >0.$ This implies that $f\in N_{k,c}^{\alpha }\left( \lambda ,\mu
,\phi \left( z\right) \right) $ for $r<-c+\sqrt{c^{2}+1}.$ This result is
sharp for the function $p\left( z\right) =\frac{1+z}{1-z}.$
\end{proof}

\begin{theorem}
Let $0<\mu <1,\ k=p+\left( b+2\right) /2\neq 0,-1,-2,\ldots ,b,c,p\in 
%TCIMACRO{\U{2102} }%
%BeginExpansion
\mathbb{C}
%EndExpansion
$. Then%
\begin{equation*}
N_{k,c}^{0}\left( \lambda _{2},\mu ,\phi \right) \subset N_{k,c}^{0}\left(
\lambda _{1},\mu ,\phi \right) ,\text{ }0\leq \lambda _{1}<\lambda _{2}.
\end{equation*}
\end{theorem}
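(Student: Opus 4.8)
The plan is to view the functional defining membership in the class as an affine function of the parameter $\lambda $, and then to display the $\lambda _{1}$-functional as an explicit convex combination of the $\lambda _{2}$-functional and the $\lambda =0$ functional, so that the convex-combination principle of Lemma \ref{3} applies. Write $\alpha =0$ and abbreviate
\begin{equation*}
\Phi _{\lambda }\left( z\right) =\left( 1+\lambda \right) \left( \frac{z}{S_{k+1}^{c}f\left( z\right) }\right) ^{\mu }-\lambda \frac{S_{k}^{c}f\left( z\right) }{S_{k+1}^{c}f\left( z\right) }\left( \frac{z}{S_{k+1}^{c}f\left( z\right) }\right) ^{\mu },
\end{equation*}
so that, by the Definition $\left( \ref{i}\right) $, the statement $f\in N_{k,c}^{0}\left( \lambda ,\mu ,\phi \right) $ is exactly $\Phi _{\lambda }\prec \phi $. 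The crucial observation is that $\Phi _{\lambda }$ is affine in $\lambda $: with $P\left( z\right) =\left( z/S_{k+1}^{c}f\left( z\right) \right) ^{\mu }=\Phi _{0}\left( z\right) $ one has $\Phi _{\lambda }=P+\lambda \left( P-\frac{S_{k}^{c}f}{S_{k+1}^{c}f}P\right) $.

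Given $0\leq \lambda _{1}<\lambda _{2}$, I would set $\sigma =\lambda _{1}/\lambda _{2}\in \lbrack 0,1)$; a one-line verification using the affine form then yields
\begin{equation*}
\Phi _{\lambda _{1}}=\sigma \,\Phi _{\lambda _{2}}+\left( 1-\sigma \right) \Phi _{0}.
\end{equation*}
It therefore suffices to prove that both $\Phi _{\lambda _{2}}$ and $\Phi _{0}$ are subordinate to $\phi $. The first is immediate from the hypothesis $f\in N_{k,c}^{0}\left( \lambda _{2},\mu ,\phi \right) $. For the second I would invoke Theorem \ref{5} with $\alpha =0$ and $\lambda =\lambda _{2}$, whose conclusion reads $\left( z/S_{k+1}^{c}f\left( z\right) \right) ^{\mu }\prec \phi \left( z\right) $, i.e. $\Phi _{0}\prec \phi $. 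Since $P\left( 0\right) =1$ and $S_{k}^{c}f\left( z\right) /S_{k+1}^{c}f\left( z\right) \rightarrow 1$ as $z\rightarrow 0$, each of $\Phi _{\lambda _{2}},\Phi _{0}$ takes the value $1=\phi \left( 0\right) $ at the origin, so the subordinations and the convex combination all respect the normalization required by Lemma \ref{3}. Applying that lemma (with $\phi $ in the role of the convex function $F$) to the displayed combination gives $\Phi _{\lambda _{1}}\prec \phi $, that is $f\in N_{k,c}^{0}\left( \lambda _{1},\mu ,\phi \right) $, as desired.

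The one point demanding care is the appeal to Theorem \ref{5}, whose hypothesis requires $\func{Re}\left( \mu k/\lambda _{2}\right) \geq 0$; as $\mu >0$ and $\lambda _{2}>0$ this is a positivity condition on $k$ that should be read as part of the standing assumptions (it holds automatically when the parameters are real and positive). Apart from this, the proof is entirely routine: the decomposition identity is purely algebraic, and Lemma \ref{3} performs the final transfer of subordination across the convex combination.
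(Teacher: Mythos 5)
Your proposal is correct and is essentially the paper's own proof: the same decomposition $\Phi_{\lambda_1}=\frac{\lambda_1}{\lambda_2}\Phi_{\lambda_2}+\left(1-\frac{\lambda_1}{\lambda_2}\right)\Phi_0$ (the paper's $h_1$ and $h_2$), with $\Phi_0\prec\phi$ obtained from Theorem \ref{5} at $\alpha=0$ and the subordination transferred across the convex combination by Lemma \ref{3}. Your closing caveat about the hidden hypothesis $\operatorname{Re}\left(\mu k/\lambda_2\right)\geq 0$ needed to invoke Theorem \ref{5} is a point the paper itself passes over silently.
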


\begin{proof}
Since $f\in N_{k,c}^{\alpha }\left( \lambda _{2},\mu ,\phi \right) ,$
therefore we have%
\begin{equation*}
h_{1}\left( z\right) =\left( 1+\lambda _{2}\right) \left( \frac{z}{%
S_{k+1}^{c}f\left( z\right) }\right) ^{\mu }-\lambda _{2}\frac{%
S_{k}^{c}f\left( z\right) }{S_{k+1}^{c}f\left( z\right) }\left( \frac{z}{%
S_{k+1}^{c}f\left( z\right) }\right) ^{\mu }\prec \phi (z).
\end{equation*}%
From Theorem \ref{5} for $\alpha =0$, we write%
\begin{equation*}
h_{2}\left( z\right) =\left( \frac{z}{S_{k+1}^{c}f\left( z\right) }\right)
^{\mu }\prec \phi (z),\text{ }z\in E\text{.}
\end{equation*}%
Now for $\lambda _{1}\geq 0$, we obtain%
\begin{eqnarray*}
&&\left( 1+\lambda _{1}\right) \left( \frac{z}{S_{k+1}^{c}f\left( z\right) }%
\right) ^{\mu }-\lambda _{1}\frac{S_{k}^{c}f\left( z\right) }{%
S_{k+1}^{c}f\left( z\right) }\left( \frac{z}{S_{k+1}^{c}f\left( z\right) }%
\right) ^{\mu } \\
&=&(1-\frac{\lambda _{1}}{\lambda _{2}})\left( \frac{z}{S_{k+1}^{c}f\left(
z\right) }\right) ^{\mu }+ \\
&&\frac{\lambda _{1}}{\lambda _{2}}\left\{ \left( 1+\lambda _{2}\right)
\left( \frac{z}{S_{k+1}^{c}f\left( z\right) }\right) ^{\mu }-\lambda _{2}%
\frac{S_{k}^{c}f\left( z\right) }{S_{k+1}^{c}f\left( z\right) }\left( \frac{z%
}{S_{k+1}^{c}f\left( z\right) }\right) ^{\mu }\right\} \\
&=&\frac{\lambda _{1}}{\lambda _{2}}h_{1}(z)+(1-\frac{\lambda _{1}}{\lambda
_{2}})h_{2}(z).
\end{eqnarray*}%
Using the convexity of the class of the functions $\phi (z)$ and Lemma \ref%
{3}, we write%
\begin{equation*}
\frac{\lambda _{1}}{\lambda _{2}}h_{1}(z)+(1-\frac{\lambda _{1}}{\lambda _{2}%
})h_{2}(z)\prec \phi (z)\text{, \ }z\in E\text{,}
\end{equation*}%
This implies that $f\in N_{k,c}^{0}\left( \lambda _{1},\mu ,\phi \right) $.
Hence the proof of the theorem is complete.
\end{proof}

\begin{corollary}
Let $0<\mu <1,\ k=p+\left( b+2\right) /2\neq 0,-1,-2,\ldots ,b,c,p\in 
%TCIMACRO{\U{2102} }%
%BeginExpansion
\mathbb{C}
%EndExpansion
$. Then for $-1\leq B_{1}\leq B_{2}<A_{2}\leq A_{1}\leq 1,$%
\begin{equation*}
N_{k,c}^{0}\left( \lambda _{2},\mu ,\frac{1+A_{2}z}{1+B_{2}z}\right) \subset
N_{k,c}^{0}\left( \lambda _{1},\mu ,\frac{1+A_{1}z}{1+B1z}\right) ,\text{ }%
0\leq \lambda _{1}<\lambda _{2}\text{, }z\in E\text{.}
\end{equation*}
\end{corollary}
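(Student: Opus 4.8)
The plan is to decouple the asserted inclusion into two independent moves—first shrinking the parameter $\lambda$, then enlarging the dominant—and to chain them by transitivity of subordination. First I would record the standing fact that for $-1\le B<A\le 1$ the Janowski function $\frac{1+Az}{1+Bz}$ is convex univalent in $E$, since it maps $E$ onto a half-plane or an open disk. In particular both $\frac{1+A_{2}z}{1+B_{2}z}$ and $\frac{1+A_{1}z}{1+B_{1}z}$ are admissible choices of $\phi$ in the Definition and in the preceding theorem; this uses only $-1\le B_{2}<A_{2}\le 1$, which is contained in the chain $-1\le B_{1}\le B_{2}<A_{2}\le A_{1}\le 1$.

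Next, take $f\in N_{k,c}^{0}\left(\lambda_{2},\mu,\frac{1+A_{2}z}{1+B_{2}z}\right)$. Applying the preceding $\lambda$-inclusion theorem with the fixed convex dominant $\phi=\frac{1+A_{2}z}{1+B_{2}z}$ and $0\le\lambda_{1}<\lambda_{2}$ yields at once $f\in N_{k,c}^{0}\left(\lambda_{1},\mu,\frac{1+A_{2}z}{1+B_{2}z}\right)$. Spelling out this membership at $\alpha=0$ (so that $e^{i\alpha}=1$, $\cos\alpha=1$, $\sin\alpha=0$), it says precisely that $J\left(0,c,k,f(z)\right)\prec\frac{1+A_{2}z}{1+B_{2}z}$, where $J$ is formed with the parameter $\lambda_{1}$.

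Finally I would invoke Lemma \ref{6}: the hypothesis $-1\le B_{1}\le B_{2}<A_{2}\le A_{1}\le 1$ gives $\frac{1+A_{2}z}{1+B_{2}z}\prec\frac{1+A_{1}z}{1+B_{1}z}$. Combining this with the subordination obtained in the previous step and using transitivity of subordination, we conclude $J\left(0,c,k,f(z)\right)\prec\frac{1+A_{1}z}{1+B_{1}z}$, that is, $f\in N_{k,c}^{0}\left(\lambda_{1},\mu,\frac{1+A_{1}z}{1+B_{1}z}\right)$, which is the desired inclusion.

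There is essentially no hard analytic step here; the substance lies entirely in quoting the two ingredients correctly. The only point demanding care is the order of operations—one must change $\lambda$ first, while the dominant is still $\frac{1+A_{2}z}{1+B_{2}z}$, so that the $\lambda$-theorem applies verbatim, and only afterwards enlarge the dominant via Lemma \ref{6}. Trying to alter both simultaneously would not match the hypotheses of either result, so getting this sequencing right is the main thing to watch.
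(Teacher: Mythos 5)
Your proof is correct, but it is organized differently from the paper's. You first apply the preceding $\lambda$-inclusion theorem with the dominant $\frac{1+A_{2}z}{1+B_{2}z}$ held fixed (using it as a black box), and then enlarge the dominant via Lemma \ref{6} together with transitivity of subordination. The paper proceeds in the opposite order: it first uses Lemma \ref{6} to upgrade the hypothesis to $h_{1}(z)\prec \frac{1+A_{1}z}{1+B_{1}z}$, and then, rather than citing the $\lambda$-theorem, it re-runs that theorem's argument inline --- invoking Theorem \ref{5} to get $h_{2}(z)=\left( \frac{z}{S_{k+1}^{c}f\left( z\right) }\right)^{\mu }\prec \frac{1+A_{1}z}{1+B_{1}z}$, writing the $\lambda_{1}$-expression as the convex combination $\frac{\lambda _{1}}{\lambda _{2}}h_{1}+(1-\frac{\lambda _{1}}{\lambda _{2}})h_{2}$, and finishing with Lemma \ref{3}. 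Your route is more economical: it avoids duplicating the convex-combination computation and makes clear that the corollary is a formal consequence of the theorem plus Lemma \ref{6}. One small correction to your closing remark: the sequencing is not actually forced. Since the $\lambda$-theorem holds for an arbitrary convex dominant $\phi$, one may equally well enlarge the dominant first (getting $f\in N_{k,c}^{0}\left( \lambda _{2},\mu ,\frac{1+A_{1}z}{1+B_{1}z}\right) $ by transitivity) and then apply the $\lambda$-theorem with $\phi =\frac{1+A_{1}z}{1+B_{1}z}$; this is in effect what the paper does, just with the theorem's proof unfolded rather than quoted.
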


\begin{proof}
Let $f\in N_{k,c}^{0}\left( \lambda _{2},\mu ,\frac{1+A_{2}z}{1+B_{2}z}%
\right) .$ Then%
\begin{equation*}
h_{1}\left( z\right) =\left( 1+\lambda _{2}\right) \left( \frac{z}{%
S_{k+1}^{c}f\left( z\right) }\right) ^{\mu }-\lambda _{2}\frac{%
S_{k}^{c}f\left( z\right) }{S_{k+1}^{c}f\left( z\right) }\left( \frac{z}{%
S_{k+1}^{c}f\left( z\right) }\right) ^{\mu }\prec \frac{1+A_{2}z}{1+B_{2}z}.
\end{equation*}%
Since $-1\leq B_{1}\leq B_{2}<A_{2}\leq A_{1}\leq 1,$ therefore by Lemma \ref%
{6}, we have%
\begin{equation*}
h_{1}\left( z\right) =\left( 1+\lambda _{2}\right) \left( \frac{z}{%
S_{k+1}^{c}f\left( z\right) }\right) ^{\mu }-\lambda _{2}\frac{%
S_{k}^{c}f\left( z\right) }{S_{k+1}^{c}f\left( z\right) }\left( \frac{z}{%
S_{k+1}^{c}f\left( z\right) }\right) ^{\mu }\prec \frac{1+A_{1}z}{1+B_{1}z}.
\end{equation*}%
Theorem \ref{5} implies for $\phi (z)=\frac{1+A_{1}z}{1+B_{1}z}$ that%
\begin{equation*}
h_{2}\left( z\right) =\left( \frac{z}{S_{k+1}^{c}f\left( z\right) }\right)
^{\mu }\prec \frac{1+A_{1}z}{1+B_{1}z}.
\end{equation*}%
Now for $\lambda _{2}>\lambda _{1}\geq 0,$%
\begin{eqnarray*}
&&\left( 1+\lambda _{1}\right) \left( \frac{z}{S_{k+1}^{c}f\left( z\right) }%
\right) ^{\mu }-\lambda _{1}\frac{S_{k}^{c}f\left( z\right) }{%
S_{k+1}^{c}f\left( z\right) }\left( \frac{z}{S_{k+1}^{c}f\left( z\right) }%
\right) ^{\mu } \\
&=&(1-\frac{\lambda _{1}}{\lambda _{2}})\left( \frac{z}{S_{k+1}^{c}f\left(
z\right) }\right) ^{\mu }+ \\
&&\frac{\lambda _{1}}{\lambda _{2}}\left\{ \left( 1+\lambda _{2}\right)
\left( \frac{z}{S_{k+1}^{c}f\left( z\right) }\right) ^{\mu }-\lambda _{2}%
\frac{S_{k}^{c}f\left( z\right) }{S_{k+1}^{c}f\left( z\right) }\left( \frac{z%
}{S_{k+1}^{c}f\left( z\right) }\right) ^{\mu }\right\} \\
&=&\frac{\lambda _{1}}{\lambda _{2}}h_{1}(z)+(1-\frac{\lambda _{1}}{\lambda
_{2}})h_{2}(z).
\end{eqnarray*}%
Using the convexity of the function $\frac{1+A_{1}z}{1+B_{1}z}$ with Lemma %
\ref{3}, we write%
\begin{equation*}
\frac{\lambda _{1}}{\lambda _{2}}h_{1}(z)+(1-\frac{\lambda _{1}}{\lambda _{2}%
})h_{2}(z)\prec \frac{1+A_{1}z}{1+B_{1}z}\text{, \ }z\in E\text{,}
\end{equation*}%
This implies that $f\in N_{k,c}^{0}\left( \lambda _{1},\mu ,\frac{1+A_{1}z}{%
1+B_{1}z}\right) $.
\end{proof}

\begin{theorem}
Let $f\in N_{k,c}^{0}\left( \lambda ,\mu ,\phi \right) ,$ $0<\mu <1,\
k=p+\left( b+2\right) /2\neq 0,-1,-2,\ldots ,b,c,p\in 
%TCIMACRO{\U{2102} }%
%BeginExpansion
\mathbb{C}
%EndExpansion
$ and $\lambda \leq -1.$ Then%
\begin{equation*}
\frac{S_{k}^{c}f\left( z\right) }{S_{k+1}^{c}f\left( z\right) }\left( \frac{z%
}{S_{k+1}^{c}f\left( z\right) }\right) ^{\mu }\prec \phi (z).
\end{equation*}
\end{theorem}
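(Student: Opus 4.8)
The plan is to mirror the convex‑combination argument used in the preceding inclusion theorem, but now solving for the quantity of interest rather than substituting for it. First I would introduce the two shorthand functions
\begin{equation*}
h_{2}(z)=\left( \frac{z}{S_{k+1}^{c}f(z)}\right) ^{\mu },\qquad
Q(z)=\frac{S_{k}^{c}f(z)}{S_{k+1}^{c}f(z)}\left( \frac{z}{S_{k+1}^{c}f(z)}\right) ^{\mu },
\end{equation*}
so that $Q$ is precisely the expression the theorem asks us to dominate by $\phi$. Since $f\in N_{k,c}^{0}(\lambda ,\mu ,\phi )$, the defining condition $(\ref{i})$ with $\alpha =0$ supplies the first subordination,
\begin{equation*}
h_{1}(z):=(1+\lambda )h_{2}(z)-\lambda Q(z)\prec \phi (z),
\end{equation*}
while Theorem \ref{5} applied with $\alpha =0$ furnishes the second, namely $h_{2}(z)\prec \phi (z)$.

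The decisive algebraic step is to solve the linear relation $h_{1}=(1+\lambda )h_{2}-\lambda Q$ for $Q$, giving
\begin{equation*}
Q(z)=\frac{1+\lambda }{\lambda }h_{2}(z)-\frac{1}{\lambda }h_{1}(z)=\sigma h_{2}(z)+(1-\sigma )h_{1}(z),\qquad \sigma =\frac{1+\lambda }{\lambda }=1+\frac{1}{\lambda }.
\end{equation*}
This is exactly where the hypothesis $\lambda \leq -1$ is used. One checks that the two weights sum to $1$ automatically, and that $\lambda \leq -1$ forces $\tfrac{1}{\lambda }\in [-1,0)$, whence $\sigma =1+\tfrac{1}{\lambda }\in [0,1)$ and $1-\sigma =-\tfrac{1}{\lambda }\in (0,1]$. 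Thus $Q$ is genuinely a convex combination of $h_{1}$ and $h_{2}$ with both coefficients in $[0,1]$; note the endpoint $\lambda =-1$ gives $\sigma =0$, consistent with $Q=h_{1}$ there.

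Finally, since $\phi $ is convex univalent with $\phi (0)=1$ and both $h_{1}\prec \phi $ and $h_{2}\prec \phi $, Lemma \ref{3} applied with this $\sigma $ yields $Q=\sigma h_{2}+(1-\sigma )h_{1}\prec \phi $, which is the assertion. I expect the only genuinely delicate point to be the legitimacy of invoking Theorem \ref{5}: its statement carries the side condition $\operatorname{Re}(\mu k/\lambda )\geq 0$, and for $\lambda \leq -1$ with $0<\mu <1$ this translates into a constraint on $\operatorname{Re}k$, so one should either record that restriction or verify it holds in the intended setting. Once $h_{2}\prec \phi $ is secured, the remainder is the routine convex‑combination computation already exhibited in the inclusion theorem.
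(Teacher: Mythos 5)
Your proof is correct and follows essentially the same route as the paper's: the paper likewise solves the defining relation for the target quantity, writes it as the convex combination $\left(1+\tfrac{1}{\lambda}\right)h_{2}(z)-\tfrac{1}{\lambda}h_{1}(z)$, and concludes via Theorem \ref{5} (with $\alpha=0$) together with Lemma \ref{3} and the convexity of $\phi$. If anything, your write-up is cleaner: the paper's displayed decomposition contains a sign typo (it shows $+\lambda\,\frac{S_{k}^{c}f(z)}{S_{k+1}^{c}f(z)}\left(\frac{z}{S_{k+1}^{c}f(z)}\right)^{\mu}$ inside the braces where $-\lambda\,\frac{S_{k}^{c}f(z)}{S_{k+1}^{c}f(z)}\left(\frac{z}{S_{k+1}^{c}f(z)}\right)^{\mu}$ is meant), and your caveat about the side condition $\operatorname{Re}(\mu k/\lambda)\geq 0$ needed to invoke Theorem \ref{5} flags a hypothesis that the paper itself silently omits.
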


\begin{proof}
Since $f\in N_{k,c}^{0}\left( \lambda ,\mu ,\phi \right) ,\ $therefore we
have%
\begin{equation*}
\left( 1+\lambda \right) \left( \frac{z}{S_{k+1}^{c}f\left( z\right) }%
\right) ^{\mu }-\lambda \frac{S_{k}^{c}f\left( z\right) }{S_{k+1}^{c}f\left(
z\right) }\left( \frac{z}{S_{k+1}^{c}f\left( z\right) }\right) ^{\mu }\prec
\phi (z).
\end{equation*}%
Now consider%
\begin{eqnarray*}
\lambda \frac{S_{k}^{c}f\left( z\right) }{S_{k+1}^{c}f\left( z\right) }%
\left( \frac{z}{S_{k+1}^{c}f\left( z\right) }\right) ^{\mu } &=&\left(
1+\lambda \right) \left( \frac{z}{S_{k+1}^{c}f\left( z\right) }\right) ^{\mu
}+\lambda \frac{S_{k}^{c}f\left( z\right) }{S_{k+1}^{c}f\left( z\right) }%
\left( \frac{z}{S_{k+1}^{c}f\left( z\right) }\right) ^{\mu } \\
&&-\left( 1+\lambda \right) \left( \frac{z}{S_{k+1}^{c}f\left( z\right) }%
\right) ^{\mu }.
\end{eqnarray*}%
This implies that%
\begin{eqnarray*}
\frac{S_{k}^{c}f\left( z\right) }{S_{k+1}^{c}f\left( z\right) }\left( \frac{z%
}{S_{k+1}^{c}f\left( z\right) }\right) ^{\mu } &=&\left( 1+\frac{1}{\lambda }%
\right) \left( \frac{z}{S_{k+1}^{c}f\left( z\right) }\right) ^{\mu } \\
&&-\frac{1}{\lambda }\left\{ \left( 1+\lambda \right) \left( \frac{z}{%
S_{k+1}^{c}f\left( z\right) }\right) ^{\mu }+\lambda \frac{S_{k}^{c}f\left(
z\right) }{S_{k+1}^{c}f\left( z\right) }\left( \frac{z}{S_{k+1}^{c}f\left(
z\right) }\right) ^{\mu }\right\}
\end{eqnarray*}%
Using Theorem \ref{5}, Lemma \ref{3} and the convexity of $\phi (z)$ with $%
\lambda \leq -1,$ we have the required result.
\end{proof}

\begin{theorem}
Let $f\in N_{k,c}^{\alpha }\left( \lambda ,\mu ,h\right) ,h(z)=\frac{1+Az}{%
1+Bz}+\frac{\lambda \mu }{k}\frac{\left( A-B\right) z}{\left( 1+Bz\right)
^{2}}.$ Then for $\func{Re}\frac{\lambda }{\mu k}>0,$%
\begin{equation*}
e^{i\alpha }\left( \frac{z}{S_{k+1}^{c}f\left( z\right) }\right) ^{\mu
}\prec \left( \cos \alpha \right) \phi \left( z\right) +i\sin \alpha \text{.}
\end{equation*}%
where $\phi \left( z\right) =\frac{1+Az}{1+Bz}.$ This result is \ the best
possible.
\end{theorem}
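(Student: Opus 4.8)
The plan is to recognize this as a differential-subordination problem of exactly the type handled by Lemma \ref{7}, reusing the function $p$ and the identity established in the proof of Theorem \ref{5}. First I would set
\[
p(z)=\frac{1}{\cos \alpha }\left\{ e^{i\alpha }\left( \frac{z}{S_{k+1}^{c}f\left( z\right) }\right) ^{\mu }-i\sin \alpha \right\},
\]
so that $p$ is analytic in $E$ with $p(0)=1$ and $e^{i\alpha }(z/S_{k+1}^{c}f)^{\mu }=(\cos \alpha )p(z)+i\sin \alpha$. The computation already carried out in the proof of Theorem \ref{5} gives
\[
p(z)+\frac{\lambda }{\mu k}zp^{\prime }(z)=\frac{1}{\cos \alpha }\left[ e^{i\alpha }\left\{ (1+\lambda )\left( \tfrac{z}{S_{k+1}^{c}f}\right) ^{\mu }-\lambda \tfrac{S_{k}^{c}f}{S_{k+1}^{c}f}\left( \tfrac{z}{S_{k+1}^{c}f}\right) ^{\mu }\right\} -i\sin \alpha \right],
\]
and since $f\in N_{k,c}^{\alpha }(\lambda ,\mu ,h)$ the right-hand side is subordinate to $h$. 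Thus $p(z)+\frac{\lambda }{\mu k}zp^{\prime }(z)\prec h(z)$.

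The key observation is that $h$ is precisely the image of $\phi $ under the same differential operator. With $\phi (z)=\frac{1+Az}{1+Bz}$ one computes $z\phi ^{\prime }(z)=\frac{(A-B)z}{(1+Bz)^{2}}$, so that
\[
\phi (z)+\frac{\lambda }{\mu k}z\phi ^{\prime }(z)=\frac{1+Az}{1+Bz}+\frac{\lambda }{\mu k}\frac{(A-B)z}{(1+Bz)^{2}}=h(z).
\]
Hence the subordination becomes $p(z)+\frac{\lambda }{\mu k}zp^{\prime }(z)\prec \phi (z)+\frac{\lambda }{\mu k}z\phi ^{\prime }(z)$, which is exactly the hypothesis of Lemma \ref{7} with the choices $g(z)=\phi (z)$, $\theta (w)=w$, $\varphi (w)=\frac{\lambda }{\mu k}$ and $q(z)=p(z)$. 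Observe $q(0)=1=\phi (0)=g(0)$ and $\varphi \equiv \frac{\lambda }{\mu k}\neq 0$, so the admissibility requirement on $\varphi $ holds.

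It then remains to verify the two structural hypotheses of Lemma \ref{7}. Here $Q(z)=zg^{\prime }(z)\varphi (g(z))=\frac{\lambda }{\mu k}\frac{(A-B)z}{(1+Bz)^{2}}$, and logarithmic differentiation gives $\frac{zQ^{\prime }(z)}{Q(z)}=\frac{1-Bz}{1+Bz}$. For the admissible range $-1\leq B<A\leq 1$ the map $z\mapsto \frac{1-Bz}{1+Bz}$ carries $E$ into the right half-plane, so $\func{Re}\frac{zQ^{\prime }(z)}{Q(z)}>0$ and $Q$ is univalently starlike; this is condition (i). For condition (ii) one has $\theta ^{\prime }(g(z))=1$ and $\varphi (g(z))=\frac{\lambda }{\mu k}$, whence
\[
\func{Re}\left\{ \frac{\theta ^{\prime }(g(z))}{\varphi (g(z))}+\frac{zQ^{\prime }(z)}{Q(z)}\right\} =\func{Re}\frac{\mu k}{\lambda }+\func{Re}\frac{1-Bz}{1+Bz}.
\]
Because $\func{Re}\frac{\lambda }{\mu k}>0$ forces $\func{Re}\frac{\mu k}{\lambda }>0$ (a nonzero number and its reciprocal have real parts of the same sign), both summands are positive and (ii) follows.

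With both conditions confirmed, Lemma \ref{7} yields $p(z)\prec \phi (z)$ with $\phi $ the best dominant, and applying the injective affine map $w\mapsto (\cos \alpha )w+i\sin \alpha $ to both sides produces the asserted subordination for $e^{i\alpha }(z/S_{k+1}^{c}f)^{\mu }$ together with its sharpness. The main obstacle is not conceptual but the verification of conditions (i) and (ii) of Lemma \ref{7}; both collapse to the single Möbius identity $\frac{zQ^{\prime }(z)}{Q(z)}=\frac{1-Bz}{1+Bz}$ and the elementary sign-preservation under reciprocation, so no genuine difficulty arises once $h$ has been recognized as $\phi +\frac{\lambda }{\mu k}z\phi ^{\prime }$.
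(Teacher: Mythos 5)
Your proposal is correct and follows essentially the same route as the paper: the same auxiliary function $p$, the same reduction to the differential subordination $p(z)+\frac{\lambda }{\mu k}zp^{\prime }(z)\prec h(z)$, and the same application of Lemma \ref{7} with $g(z)=\frac{1+Az}{1+Bz}$, $\theta (w)=w$, a constant $\varphi $, and the same starlikeness verification for $Q$ (your M\"{o}bius-map observation $\frac{zQ^{\prime }(z)}{Q(z)}=\frac{1-Bz}{1+Bz}$ replaces the paper's explicit computation with $z=re^{i\psi }$, but the content is identical). The one substantive difference is the choice of constant: you take $\varphi (w)=\frac{\lambda }{\mu k}$ and accordingly read $h$ as $\phi (z)+\frac{\lambda }{\mu k}z\phi ^{\prime }(z)$, whereas the paper takes $\varphi (w)=\frac{\mu k}{\lambda }$ and the theorem statement writes the coefficient of $\frac{(A-B)z}{(1+Bz)^{2}}$ as $\frac{\lambda \mu }{k}$; these three quantities are pairwise different in general. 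Your choice is the only internally consistent one, since Lemma \ref{7} needs $h=\theta (g)+zg^{\prime }\varphi (g)$ to coincide with the dominant in the subordination $p+\frac{\lambda }{\mu k}zp^{\prime }\prec h$, so in effect you have silently corrected two slips in the paper (its $\varphi $ does not match the subordination it derives, and the coefficient $\frac{\lambda \mu }{k}$ in the statement appears to be a typo for $\frac{\lambda }{\mu k}$). Your compensating step for condition (ii) --- that $\func{Re}\frac{\lambda }{\mu k}>0$ forces $\func{Re}\frac{\mu k}{\lambda }>0$ since reciprocation of a nonzero complex number preserves the sign of the real part --- is valid, so your version of the argument goes through without the inconsistency present in the paper's own proof.
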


\begin{proof}
Consider%
\begin{equation*}
p(z)=\frac{1}{\cos \alpha }\left\{ e^{i\alpha }\left( \frac{z}{%
S_{k+1}^{c}f\left( z\right) }\right) ^{\mu }-i\sin \alpha \right\} .
\end{equation*}%
Then $p$ is analytic in $E$ with $p(0)=1$. Therefore$,\ $we have%
\begin{equation*}
e^{i\alpha }\left( \frac{z}{S_{k+1}^{c}f\left( z\right) }\right) ^{\mu
}=\left( \cos \alpha \right) p(z)+i\sin \alpha \text{.}
\end{equation*}%
Differentiating both sides, using $\left( \ref{h}\right) $ and simplifying,
we obtain%
\begin{equation*}
\frac{\lambda \left( \cos \alpha \right) zp^{\prime }\left( z\right) }{\mu k}%
=\lambda e^{i\alpha }\left\{ \left( \frac{z}{S_{k+1}^{c}f\left( z\right) }%
\right) ^{\mu }-\frac{S_{k}^{c}f\left( z\right) }{S_{k+1}^{c}f\left(
z\right) }\left( \frac{z}{S_{k+1}^{c}f\left( z\right) }\right) ^{\mu
}\right\} .
\end{equation*}%
It follows from above equation and $\left( \ref{j}\right) $ that%
\begin{eqnarray*}
&&p\left( z\right) +\frac{\lambda }{\mu k}zp^{\prime }\left( z\right) \\
&=&\frac{1}{\cos \alpha }\left[ e^{i\alpha }\left\{ \left( 1+\lambda \right)
\left( \frac{z}{S_{k+1}^{c}f\left( z\right) }\right) ^{\mu }-\lambda \frac{%
S_{k}^{c}f\left( z\right) }{S_{k+1}^{c}f\left( z\right) }\left( \frac{z}{%
S_{k+1}^{c}f\left( z\right) }\right) ^{\mu }\right\} -i\sin \alpha \right] .
\end{eqnarray*}%
Since $f\in N_{k,c}^{\alpha }\left( \lambda ,\mu ,h\right) ,$ therefore%
\begin{equation*}
p\left( z\right) +\frac{\lambda }{\mu k}zp^{\prime }\left( z\right) \prec
h\left( z\right) .
\end{equation*}%
Now we choose $g\left( z\right) =\frac{1+Az}{1+Bz},$ then $\theta (w)=w$ and 
$\varphi (w)=\frac{\mu k}{\lambda }.$ It is clear that $g\left( z\right) $
is analytic in $E$ with $g\left( 0\right) =1.$ Also $\theta (w)$ and $%
\varphi (w)$ are analytic with $\theta (w)\neq 0$.

We see that%
\begin{equation}
Q\left( z\right) =zg^{\prime }(z)\varphi (g(z))=\frac{\mu k}{\lambda }\frac{%
\left( A-B\right) z}{\left( 1+Bz\right) ^{2}}.  \label{n}
\end{equation}%
We have to prove that $Q\left( z\right) $ is starlike. In other words we
show that $\func{Re}\frac{zQ^{\prime }(z)}{Q(z)}>0.$ From $\left( \ref{n}%
\right) $, we have%
\begin{eqnarray*}
\func{Re}\frac{zQ^{\prime }(z)}{Q(z)} &=&\func{Re}\left\{ 1-\frac{2Bz}{1+Bz}%
\right\} \\
&=&1-2B\func{Re}\frac{re^{i\psi }}{1+Bre^{i\psi }}\ \ \ \ \ \ \ \ \ \left(
z=re^{i\psi }\right) \\
&=&\frac{1-B^{2}r^{2}}{\left( 1+Br\cos \psi \right) ^{2}+B^{2}r^{2}\sin
^{2}\psi }.
\end{eqnarray*}%
Since $-1\leq B<1,$ $r<1.$ This implies that $\func{Re}\frac{zQ^{\prime }(z)%
}{Q(z)}>0.$ Consider%
\begin{eqnarray*}
\func{Re}\frac{zh^{\prime }(z)}{Q(z)} &=&\func{Re}\left\{ \frac{\theta
^{\prime }(g(z))}{\varphi (g(z))}+\frac{zQ^{\prime }(z)}{Q(z)}\right\} \\
&=&\func{Re}\frac{\lambda }{\mu k}+\func{Re}\frac{zQ^{\prime }(z)}{Q(z)}>0.
\end{eqnarray*}%
Using Lemma \ref{7}, we have $e^{i\alpha }\left( \frac{z}{S_{k+1}^{c}f\left(
z\right) }\right) ^{\mu }\prec \left( \cos \alpha \right) \phi \left(
z\right) +i\sin \alpha $. The function $\phi \left( z\right) =\frac{1+Az}{%
1+Bz}$ is the best possible.
\end{proof}

\begin{theorem}
Let $f\in N_{k,c}^{\alpha }\left( \lambda ,\mu ,\frac{1+Az}{1+Bz}\right) .$
Then for $k,\lambda \in 
%TCIMACRO{\U{211d} }%
%BeginExpansion
\mathbb{R}
%EndExpansion
$ and $\frac{\mu k}{\lambda }\geq 0,$%
\begin{eqnarray*}
&&\left. 
\begin{array}{c}
\frac{A}{B}+\left( 1-\frac{A}{B}\right) \ _{2}F_{1}\left( 1,\frac{\mu k}{%
\lambda },\frac{\mu k}{\lambda }+1;B\right) ,\ B\neq 0, \\ 
1-\frac{\mu k}{\mu k+\lambda }A,\ \ \ \ \ \ \ \ \ \ \ \ \ \ \ \ \ \ \ \ \ \
\ \ \ \ \ \ \ \ B=0.%
\end{array}%
\right\} \\
&<&\frac{1}{\cos \alpha }\func{Re}\left\{ e^{i\alpha }\left( \frac{z}{%
S_{k+1}^{c}f\left( z\right) }\right) ^{\mu }\right\} <\left\{ 
\begin{array}{c}
\frac{A}{B}+\left( 1-\frac{A}{B}\right) \ _{2}F_{1}\left( 1,\frac{\mu k}{%
\lambda },\frac{\mu k}{\lambda }+1;-B\right) ,\ B\neq 0, \\ 
1+\frac{\mu k}{\mu k+\lambda }A,\ \ \ \ \ \ \ \ \ \ \ \ \ \ \ \ \ \ \ \ \ \
\ \ \ \ \ \ \ \ \ \ B=0.%
\end{array}%
\right.
\end{eqnarray*}
\end{theorem}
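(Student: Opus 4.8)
The plan is to proceed exactly as in the first Corollary after Theorem~\ref{5}, but to extract a two-sided estimate rather than a one-sided one. Since $f\in N_{k,c}^{\alpha}\left(\lambda,\mu,\frac{1+Az}{1+Bz}\right)$, Theorem~\ref{5} furnishes
\begin{equation*}
e^{i\alpha}\left(\frac{z}{S_{k+1}^{c}f(z)}\right)^{\mu}\prec\frac{\mu k}{\lambda}(\cos\alpha)z^{-\frac{\mu k}{\lambda}}\int_{0}^{z}\frac{1+At}{1+Bt}t^{\frac{\mu k}{\lambda}-1}dt+i\sin\alpha.
\end{equation*}
Writing $h(z)=\frac{\mu k}{\lambda}z^{-\frac{\mu k}{\lambda}}\int_{0}^{z}\frac{1+At}{1+Bt}t^{\frac{\mu k}{\lambda}-1}dt$ and substituting $t=zu$, the definition of subordination produces a Schwarz function $w$ for which
\begin{equation*}
\frac{1}{\cos\alpha}\func{Re}\left\{e^{i\alpha}\left(\frac{z}{S_{k+1}^{c}f(z)}\right)^{\mu}\right\}=\frac{\mu k}{\lambda}\func{Re}\int_{0}^{1}\frac{1+Auw(z)}{1+Buw(z)}u^{\frac{\mu k}{\lambda}-1}du.
\end{equation*}
The whole theorem then reduces to bounding this integral on both sides.

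Because $k,\lambda\in\mathbb{R}$ and $\frac{\mu k}{\lambda}\ge0$, the weight $\frac{\mu k}{\lambda}u^{\frac{\mu k}{\lambda}-1}$ is nonnegative on $(0,1)$, so it suffices to bound the integrand pointwise in $u$. I would fix $u\in(0,1)$ and study $g(\zeta)=\frac{1+Au\zeta}{1+Bu\zeta}$ on the closed disk. Its pole $-1/(Bu)$ lies outside $\overline{E}$ since $|B|u<1$, so $g$ maps $\overline{E}$ onto a closed disk; as all coefficients are real the map commutes with conjugation, hence the image disk is symmetric about the real axis and its two real-axis crossings $\zeta=\pm1$ are precisely its leftmost and rightmost points. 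A one-line computation gives $g(1)-g(-1)=\frac{2(A-B)u}{1-B^{2}u^{2}}>0$ under $-1\le B<A\le1$, so
\begin{equation*}
\frac{1-Au}{1-Bu}\le\func{Re}\frac{1+Au\zeta}{1+Bu\zeta}\le\frac{1+Au}{1+Bu},\qquad|\zeta|\le1,
\end{equation*}
with strict inequality for $|\zeta|<1$. Evaluating at $\zeta=w(z)$ (where $|w(z)|<1$) and integrating against the positive weight, the left inequality recovers the lower bound $h(-1)$ already seen in the Corollary, while the right inequality delivers the new upper bound $h(1)$; the strictness of the theorem's inequalities comes precisely from $|w(z)|<1$.

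Finally I would put $h(\pm1)$ in closed form. The identity $\frac{1+Au}{1+Bu}=\frac{A}{B}+\left(1-\frac{A}{B}\right)\frac{1}{1+Bu}$ (and its analogue at $-B$) splits each bound into $\frac{A}{B}\cdot\frac{\mu k}{\lambda}\int_{0}^{1}u^{\frac{\mu k}{\lambda}-1}du=\frac{A}{B}$ plus $\left(1-\frac{A}{B}\right)\frac{\mu k}{\lambda}\int_{0}^{1}u^{\frac{\mu k}{\lambda}-1}(1\pm Bu)^{-1}du$. Lemma~\ref{1}$(i)$ with $a=1,\ b=\frac{\mu k}{\lambda},\ c=\frac{\mu k}{\lambda}+1$ identifies these integrals as ${}_{2}F_{1}\!\left(1,\frac{\mu k}{\lambda},\frac{\mu k}{\lambda}+1;\mp B\right)$, yielding the stated bounds for $B\neq0$. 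For $B=0$ the integrals are elementary and give $1\pm\frac{\mu k}{\mu k+\lambda}A$, which are the two remaining bounds.

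I expect the pointwise two-sided estimate for the Möbius image to be the only delicate step; the positivity of the weight and the hypergeometric evaluation via Lemma~\ref{1} are then routine. In particular, the conjugation-symmetry argument that pins the extrema of $\func{Re}\,g$ to $\zeta=\pm1$ is exactly the fact that was invoked, but not fully justified, for the lower bound in the Corollary, and it is what must be stated carefully here to obtain both bounds simultaneously.
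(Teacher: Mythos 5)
Your proposal is correct and takes essentially the same route as the paper's proof: both start from the subordination (\ref{k}) coming from Theorem \ref{5}, reduce everything to two-sided bounds on $\operatorname{Re}\frac{1+Atz}{1+Btz}$ over the unit disk, and convert the extremal integrals into hypergeometric form via Lemma \ref{1}, with the elementary computation when $B=0$. The only difference is bookkeeping: you evaluate at the Schwarz function $w(z)$ and bound the integrand pointwise, while the paper interchanges $\sup$ and $\inf$ with the integral; your variant states the strictness more carefully, but it is the same argument.
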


\begin{proof}
Since $f\in N_{k,c}^{\alpha }\left( \lambda ,\mu ,\frac{1+Az}{1+Bz}\right) ,$
therefore by using $\left( \ref{k}\right) ,$ we have%
\begin{equation*}
\frac{1}{\cos \alpha }\func{Re}\left\{ e^{i\alpha }\left( \frac{z}{%
S_{k+1}^{c}f\left( z\right) }\right) ^{\mu }\right\} \prec \func{Re}\frac{%
\mu k}{\lambda }\dint\limits_{0}^{1}\frac{1+Atz}{1+Btz}t^{\frac{\mu k}{%
\lambda }-1}dt.
\end{equation*}%
It follows from the definition of subordination that%
\begin{eqnarray*}
\frac{1}{\cos \alpha }\func{Re}\left\{ e^{i\alpha }\left( \frac{z}{%
S_{k+1}^{c}f\left( z\right) }\right) ^{\mu }\right\} &<&\underset{\left\vert
z\right\vert <1}{\sup }\func{Re}\left\{ \frac{\mu k}{\lambda }%
\dint\limits_{0}^{1}\frac{1+Atz}{1+Btz}t^{\frac{\mu k}{\lambda }-1}dt\right\}
\\
&\leq &\left\{ \frac{\mu k}{\lambda }\dint\limits_{0}^{1}\underset{%
\left\vert z\right\vert <1}{\sup }\func{Re}\left\{ \frac{1+Atz}{1+Btz}%
\right\} t^{\frac{\mu k}{\lambda }-1}dt\right\} \\
&<&\frac{\mu k}{\lambda }\dint\limits_{0}^{1}\frac{1+At}{1+Bt}t^{\frac{\mu k%
}{\lambda }-1}dt \\
&=&\frac{\mu k}{\lambda }\dint\limits_{0}^{1}\left\{ A/B+\left( \frac{1-A/B}{%
1+Bt}\right) \right\} t^{\frac{\mu k}{\lambda }-1}dt.
\end{eqnarray*}%
Now using Lemma \ref{1} for the case $B\neq 0,$ we have%
\begin{equation*}
\frac{1}{\cos \alpha }\func{Re}\left\{ e^{i\alpha }\left( \frac{z}{%
S_{k+1}^{c}f\left( z\right) }\right) ^{\mu }\right\} <\frac{A}{B}+\left( 1-%
\frac{A}{B}\right) \text{ }_{2}F_{1}\left( 1,\frac{\mu k}{\lambda },\frac{%
\mu k}{\lambda }+1;-B\right) .
\end{equation*}%
When $B=0,$ it can be easily seen that%
\begin{eqnarray*}
\frac{1}{\cos \alpha }\func{Re}\left\{ e^{i\alpha }\left( \frac{z}{%
S_{k+1}^{c}f\left( z\right) }\right) ^{\mu }\right\} &<&\frac{\mu k}{\lambda 
}\dint\limits_{0}^{1}\left( 1+At\right) t^{\frac{\mu k}{\lambda }-1}dt \\
&=&1+\frac{\mu k}{\mu k+\lambda }A.
\end{eqnarray*}%
We also have%
\begin{eqnarray*}
\frac{1}{\cos \alpha }\func{Re}\left\{ e^{i\alpha }\left( \frac{z}{%
S_{k+1}^{c}f\left( z\right) }\right) ^{\mu }\right\} &>&\underset{\left\vert
z\right\vert <1}{\inf }\func{Re}\left\{ \frac{\mu k}{\lambda }%
\dint\limits_{0}^{1}\frac{1+Atz}{1+Btz}t^{\frac{\mu k}{\lambda }-1}dt\right\}
\\
&\geq &\left\{ \frac{\mu k}{\lambda }\dint\limits_{0}^{1}\underset{%
\left\vert z\right\vert <1}{\inf }\func{Re}\left\{ \frac{1+Atz}{1+Btz}%
\right\} t^{\frac{\mu k}{\lambda }-1}dt\right\} \\
&>&\frac{\mu k}{\lambda }\dint\limits_{0}^{1}\frac{1-At}{1-Bt}t^{\frac{\mu k%
}{\lambda }-1}dt \\
&=&\frac{\mu k}{\lambda }\dint\limits_{0}^{1}\left\{ A/B+\left( \frac{1-A/B}{%
1-Bt}\right) \right\} t^{\frac{\mu k}{\lambda }-1}dt.
\end{eqnarray*}%
Using again Lemma \ref{1}, we have the required result.
\end{proof}

\begin{theorem}
Let $f\in N_{k,c}^{\alpha }\left( \lambda ,\mu ,\frac{1+Az}{1+Bz}\right) .$
Then for $k,\lambda \in 
%TCIMACRO{\U{211d} }%
%BeginExpansion
\mathbb{R}
%EndExpansion
$ and $\frac{\mu k}{\lambda }\geq 0,$%
\begin{eqnarray*}
&&\left. 
\begin{array}{c}
\frac{A}{B}+\left( 1-\frac{A}{B}\right) \ _{2}F_{1}\left( 1,\frac{\mu k}{%
\lambda },\frac{\mu k}{\lambda }+1;Br\right) ,\ B\neq 0, \\ 
1-\frac{\mu k}{\mu k+\lambda }A,\ \ \ \ \ \ \ \ \ \ \ \ \ \ \ \ \ \ \ \ \ \
\ \ \ \ \ \ \ \ \ \ \ B=0.%
\end{array}%
\right\} \\
&\leq &\left\vert \frac{1}{\cos \alpha }\left\{ e^{i\alpha }\left( \frac{z}{%
S_{k+1}^{c}f\left( z\right) }\right) ^{\mu }\right\} -i\sin \alpha
\right\vert \leq \left\{ 
\begin{array}{c}
\frac{A}{B}+\left( 1-\frac{A}{B}\right) \ _{2}F_{1}\left( 1,\frac{\mu k}{%
\lambda },\frac{\mu k}{\lambda }+1;-Br\right) ,\ B\neq 0, \\ 
1+\frac{\mu k}{\mu k+\lambda }A,\ \ \ \ \ \ \ \ \ \ \ \ \ \ \ \ \ \ \ \ \ \
\ \ \ \ \ \ \ \ \ \ B=0.%
\end{array}%
\right.
\end{eqnarray*}
\end{theorem}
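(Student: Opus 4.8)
The plan is to mimic the structure of Theorem \ref{5} and its corollary, but to estimate the \emph{modulus} of the relevant expression instead of its real part. As in the proof of Theorem \ref{5} I would set
\[
p(z)=\frac{1}{\cos \alpha }\left\{ e^{i\alpha }\left( \frac{z}{S_{k+1}^{c}f\left( z\right) }\right) ^{\mu }-i\sin \alpha \right\},
\]
so that $p$ is analytic in $E$ with $p(0)=1$ and, up to the harmless rewriting already used above, the quantity to be estimated is $|p(z)|$. Since $f\in N_{k,c}^{\alpha }\left( \lambda ,\mu ,\frac{1+Az}{1+Bz}\right)$, relation $\left( \ref{k}\right)$ together with the substitution $t=zu$ from the corollary gives $p\prec h$, where $h(z)=\frac{\mu k}{\lambda }\int_{0}^{1}\frac{1+Atz}{1+Btz}t^{\frac{\mu k}{\lambda }-1}dt$. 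Hence there is a Schwarz function $w$ with $\left\vert w(z)\right\vert \leq |z|=r$ such that $p(z)=h(w(z))=\frac{\mu k}{\lambda }\int_{0}^{1}\frac{1+Atw(z)}{1+Btw(z)}t^{\frac{\mu k}{\lambda }-1}dt$. Everything then reduces to controlling the integrand $\frac{1+Atw}{1+Btw}$ for $|w|\leq r$.

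For the upper bound when $B\neq 0$ I would apply the triangle inequality to this integral and then use that, for each fixed $t\in(0,1]$, the linear fractional map $w\mapsto \frac{1+Atw}{1+Btw}$ is analytic on $|w|\leq r$ (its pole lies at $w=-1/(Bt)$, of modulus $\geq 1/\left\vert B\right\vert \geq 1>r$). By the maximum modulus principle its modulus is maximized on $|w|=r$, and writing $w=re^{i\psi }$ and reducing $\left\vert \frac{1+Atw}{1+Btw}\right\vert ^{2}$ to a linear fractional function of $\cos \psi$ shows the maximum equals $\frac{1+Atr}{1+Btr}$, attained at $w=r$. This gives $|p(z)|\leq \frac{\mu k}{\lambda }\int_{0}^{1}\frac{1+Atr}{1+Btr}t^{\frac{\mu k}{\lambda }-1}dt$. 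Splitting $\frac{1+Atr}{1+Btr}=\frac{A}{B}+\frac{1-A/B}{1+Btr}$ and invoking Lemma \ref{1}$(i)$ with $a=1,\ b=\frac{\mu k}{\lambda },\ c=b+1$ (the first term integrating to $\frac{A}{B}$ and the second producing the hypergeometric function at $-Br$) yields the stated upper bound $\frac{A}{B}+\left(1-\frac{A}{B}\right){}_{2}F_{1}\left(1,\frac{\mu k}{\lambda },\frac{\mu k}{\lambda }+1;-Br\right)$.

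The lower bound is where the argument needs care, and I expect it to be the main obstacle: one cannot bound $\left\vert \int \cdot \right\vert$ below by integrating a pointwise lower bound of the modulus of the integrand. The clean route is to pass through the real part. Because $A,B$ are real, the image of the disk $|w|\leq r$ under $w\mapsto \frac{1+Atw}{1+Btw}$ is a disk symmetric about the real axis, so its minimal real part is attained at $w=-r$ and equals $\frac{1-Atr}{1-Btr}$, which is positive since $A\leq 1,\ \left\vert B\right\vert <1,\ r<1$. Therefore
\[
|p(z)|\geq \func{Re}\,p(z)=\frac{\mu k}{\lambda }\int_{0}^{1}\func{Re}\frac{1+Atw(z)}{1+Btw(z)}\,t^{\frac{\mu k}{\lambda }-1}dt\geq \frac{\mu k}{\lambda }\int_{0}^{1}\frac{1-Atr}{1-Btr}t^{\frac{\mu k}{\lambda }-1}dt,
\]
and, after the split $\frac{1-Atr}{1-Btr}=\frac{A}{B}+\frac{1-A/B}{1-Btr}$, a second application of Lemma \ref{1} (now evaluating ${}_{2}F_{1}$ at $Br$) gives the claimed lower bound.

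Finally, the degenerate case $B=0$ is handled by direct computation: there $h(z)=1+\frac{\mu k}{\mu k+\lambda }Az$, so $p(z)=1+\frac{\mu k}{\mu k+\lambda }Aw(z)$, and bounding $\left\vert w(z)\right\vert$ produces the two-sided estimate $1\pm \frac{\mu k}{\mu k+\lambda }A$. Throughout, the hypothesis $\frac{\mu k}{\lambda }\geq 0$ is what preserves the direction of the inequalities when integrating against the weight $t^{\frac{\mu k}{\lambda }-1}$, and the positivity of $\frac{1-Atr}{1-Btr}$ is exactly what makes the passage $|p(z)|\geq \func{Re}\,p(z)$ effective for the lower estimate.
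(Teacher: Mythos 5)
Your proposal is correct and follows essentially the same route as the paper's proof: the subordination $\left(\ref{k}\right)$, the Schwarz-function representation of $p$, pointwise estimates on the integrand $\frac{1+Atw}{1+Btw}$ over $\left\vert w\right\vert \leq r$, and Lemma \ref{1} with $a=1$, $b=\frac{\mu k}{\lambda}$, $c=b+1$ to produce the hypergeometric values at $\mp Br$. You are in fact more careful than the paper, which derives only the upper modulus bound explicitly and then appeals to ``the same process as in the theorem above''; your observation that the lower bound must pass through $\operatorname{Re}p(z)\leq \left\vert p(z)\right\vert$ (rather than integrating a pointwise lower bound of the modulus of the integrand) supplies exactly the detail that terse remark leaves implicit.
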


\begin{proof}
Since $f\in N_{k,c}^{\alpha }\left( \lambda ,\mu ,\frac{1+Az}{1+Bz}\right) ,$
therefore by using $\left( \ref{k}\right) ,$ we have%
\begin{equation*}
\frac{1}{\cos \alpha }\left\{ e^{i\alpha }\left( \frac{z}{S_{k+1}^{c}f\left(
z\right) }\right) ^{\mu }-i\sin \alpha \right\} \prec \frac{\mu k}{\lambda }%
\dint\limits_{0}^{1}\frac{1+Atz}{1+Btz}t^{\frac{\mu k}{\lambda }-1}dt.
\end{equation*}%
It follows from the definition of subordination that%
\begin{equation*}
\frac{1}{\cos \alpha }\left\{ e^{i\alpha }\left( \frac{z}{S_{k+1}^{c}f\left(
z\right) }\right) ^{\mu }-i\sin \alpha \right\} =\left\{ \frac{\mu k}{%
\lambda }\dint\limits_{0}^{1}\frac{1+Atw\left( z\right) }{1+Btw\left(
z\right) }t^{\frac{\mu k}{\lambda }-1}dt\right\} ,
\end{equation*}%
where $w\left( z\right) =c_{1}z+c_{2}z^{2}+\ldots $ is analytic and $%
\left\vert w\left( z\right) \right\vert \leq \left\vert z\right\vert .$
Therefore%
\begin{equation*}
\left\vert \frac{1}{\cos \alpha }\left\{ e^{i\alpha }\left( \frac{z}{%
S_{k+1}^{c}f\left( z\right) }\right) ^{\mu }-i\sin \alpha \right\}
\right\vert \leq \left\{ \frac{\mu k}{\lambda }\dint\limits_{0}^{1}\frac{%
1+Atr}{1+Btr}t^{\frac{\mu k}{\lambda }-1}dt\right\} .
\end{equation*}%
Now using the same process as in the theorem above, we get the required
result.
\end{proof}

\end{document}